\documentclass[12pt,leqno,a4paper]{amsart}
\usepackage{amssymb,enumerate}
\overfullrule 5pt

\textwidth160mm
\oddsidemargin5mm
\evensidemargin5mm

\newcommand{\FF}{{\mathbb{F}}}

\newcommand{\RR}{{\mathbb{R}}}
\newcommand{\ZZ}{{\mathbb{Z}}}

\newcommand{\fA}{{\mathfrak{A}}}
\newcommand{\fS}{{\mathfrak{S}}}

\newcommand{\bB}{{\mathbf{B}}}
\newcommand{\bG}{{\mathbf{G}}}
\newcommand{\bL}{{\mathbf{L}}}
\newcommand{\bT}{{\mathbf{T}}}
\newcommand{\bU}{{\mathbf{U}}}
\newcommand{\bX}{{\mathbf{X}}}

\newcommand{\cE}{{\mathcal{E}}}

\newcommand{\Id}{{\operatorname{Id}}}
\newcommand{\Res}{{\operatorname{Res}}}
\newcommand{\Irr}{{\operatorname{Irr}}}
\newcommand{\mh}{{\operatorname{mh}}}
\newcommand{\hgt}{{\operatorname{ht}}}

\newcommand{\GL}{{\operatorname{GL}}}
\newcommand{\PGL}{{\operatorname{PGL}}}
\newcommand{\PSL}{{\operatorname{L}}}
\newcommand{\SL}{{\operatorname{SL}}}
\newcommand{\Sp}{{\operatorname{Sp}}}
\newcommand{\GO}{{\operatorname{GO}}}
\newcommand{\OO}{{\operatorname{O}}}
\newcommand{\GU}{{\operatorname{GU}}}
\newcommand{\SU}{{\operatorname{SU}}}
\newcommand{\PGU}{{\operatorname{PGU}}}
\newcommand{\PSU}{{\operatorname{U}}}
\newcommand{\Spin}{{\operatorname{Spin}}}
\newcommand{\wal}{{\widehat{\alpha}}}

\newcommand{\tw}[1]{{}^#1\!}
\def\pmod#1{~({\rm mod}~#1)}
\newcommand{\Ph}[1]{\Phi_#1}

\let\al=\alpha
\let\bt=\beta

\let\eps=\epsilon
\let\veps=\varepsilon


\newtheorem{thm}{Theorem}[section]
\newtheorem{lem}[thm]{Lemma}

\newtheorem{prop}[thm]{Proposition}

\theoremstyle{definition}

\newtheorem*{conjA}{Conjecture}

\theoremstyle{remark}
\newtheorem{rem}[thm]{Remark}

\raggedbottom

\begin{document}

\title[Characters of positive height]{Characters of positive height\\[4pt] in blocks of finite quasi-simple groups}

\date{\today}

\author{Olivier Brunat \and Gunter Malle}
\address{Universit\'e Paris-Diderot Paris 7, Institut de math\'ematiques de
         Jussieu, UFR de math\'e\-matiques, Case 7012, 75205 Paris Cedex 13,
         France.}
\email{brunat@math.jussieu.fr}
\address{FB Mathematik, TU Kaiserslautern, Postfach 3049,
         67653 Kaisers\-lautern, Germany.}
\email{malle@mathematik.uni-kl.de}

\thanks{The second author gratefully acknowledges financial support by ERC
  Advanced Grant 291512.}

\keywords{character heights, Eaton--Moret\'o conjecture}

\subjclass[2010]{Primary 20C15,20C33; Secondary 20G40}
\begin{abstract}
Eaton and Moret\'o proposed an extension of Brauer's famous height zero
conjecture on blocks of finite groups to the case of non-abelian defect
groups, which predicts the smallest non-zero height in such blocks in
terms of local data. We show that their conjecture holds for principal blocks
of quasi-simple groups, for all blocks of finite reductive groups in their
defining characteristic, as well as for
all covering groups of symmetric and alternating groups. For the proof, we
determine the minimal non-trivial character degrees of Sylow $p$-subgroups
of finite reductive groups in characteristic~$p$. We provide some further
evidence for blocks of groups of Lie type considered in cross characteristic.
\end{abstract}

\maketitle


\section{Introduction}   \label{sec:intro}

Let $G$ be a finite group, $p$ a prime and $B$ a $p$-block of $G$ with defect
group $D$. The famous Height Zero Conjecture of Richard Brauer from 1955 states
that all irreducible complex characters in $B$ have the same $p$-part in their
degree if and only if $D$ is abelian. Recently, substantial progress on this
conjecture has been made: the 'if' direction was proved by Kessar--Malle
\cite{KM13}, and the other direction was reduced to the inductive Alperin--McKay
condition for quasi-simple groups by Navarro--Sp\"ath \cite{NS13}.
Eaton and Moret\'o \cite{EM14} have recently proposed an extension of this
conjecture to blocks with non-abelian defect groups. Write $\mh(B)$ for the
minimal non-zero height of an irreducible character in $B$, and similarly (by a
slight abuse of notation) $\mh(D)$ for the minimal non-zero height of any
irreducible character of $D$ (and $\mh=\infty$ if there is no character of
positive height).

\begin{conjA}[Eaton--Moret\'o]
 \emph{Let $B$ be a $p$-block of a finite group with defect group $D$.
 Then $\mh(B)=\mh(D)$.}
\end{conjA}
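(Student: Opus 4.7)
Since the conjecture in full generality is open, the plan is to establish it in the three families announced in the abstract. For any block $B$ with defect group $D$, the equality $\mh(B)=\mh(D)$ splits into two inequalities: the upper bound $\mh(B)\le\mh(D)$ requires exhibiting an explicit character of small positive height in $B$, while the lower bound $\mh(B)\ge\mh(D)$ forbids any character of $B$ from having positive height strictly below $\mh(D)$. As with Brauer's height zero conjecture, I would proceed family by family rather than hope for a clean reduction to quasi-simple groups at this stage.

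For principal blocks of a quasi-simple group $G$ one has $D$ equal to a Sylow $p$-subgroup $P$. The inequality $\mh(B_0(G))\le\mh(P)$ should come from Brauer's first main theorem and an explicit construction of a character in $B_0(N_G(P))$ of height $\mh(P)$, for instance by taking an $N_G(P)$-orbit sum of a suitable character of $P$ and pushing it to $B_0(G)$ via the Brauer correspondence. The reverse bound, ruling out characters of $B_0(G)$ of intermediate positive height, is a case-by-case verification: for groups of Lie type via Lusztig's parametrization and the explicit formulas for $p$-parts of generic degrees; for covers of $\fS_n$ and $\fA_n$ via the hook-length formula; for sporadic groups by inspection of the known tables.

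For blocks in the defining characteristic of a finite reductive group $\bG^F$, a theorem of Humphreys asserts that every $p$-block has full defect, so $D$ is a Sylow $p$-subgroup $\bU^F$, with $\bU$ the unipotent radical of an $F$-stable Borel. The key new ingredient is a uniform description of $\mh(\bU^F)$, which amounts to determining the minimal non-trivial irreducible character degree of $\bU^F$: for classical types this is controlled by a natural root-subgroup character of smallest dimension, while for exceptional types one can rely on the explicit character-degree data of Goodwin, Le, Magaard and their collaborators. On the block side, $\mh(B)$ for a block $B$ of $\bG^F$ in defining characteristic is read off from the $p$-parts of the character degrees of $\bG^F$ via Lusztig's parametrization, and the matching is then a type-by-type check.

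The main obstacle, I anticipate, lies precisely in the defining-characteristic analysis for the exceptional types: establishing uniformly that the minimum positive $p$-part appearing in $\Irr(B)$ coincides with $p^{\mh(\bU^F)}$, and that no strictly smaller intermediate value arises, requires delicate control over the lowest-dimensional characters both of $\bU^F$ and of $\bG^F$. The symmetric-group case is combinatorially cleaner, but for $p=2$ and the spin characters of the covers of $\fS_n$ and $\fA_n$ some care with bar-partitions will be needed.
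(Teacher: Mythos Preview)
Your proposed mechanism for the inequality $\mh(B_0(G))\le\mh(P)$ has a genuine gap. Brauer's first main theorem is a bijection of \emph{blocks}, not of characters; there is no general way to ``push'' an irreducible character of $B_0(N_G(P))$ to one of $B_0(G)$ while controlling its height. A height-preserving bijection $\Irr(B_0(G))\leftrightarrow\Irr(B_0(N_G(P)))$ is itself an open conjecture (essentially a strong form of Alperin--McKay, or a consequence of Dade's projective conjecture), so you cannot invoke it as a tool here. Indeed, Eaton and Moret\'o observe that one direction of their conjecture would follow from Dade, which is exactly the step you are trying to use.

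The paper's route for principal blocks in cross-characteristic sidesteps this entirely by aiming for the much stronger statement $\mh(B_0)=1=\mh(P)$ whenever $P$ is non-abelian. Then the lower bound $\mh\ge1$ is tautological, and both equalities reduce to producing a single witness of height~$1$ on each side. On the block side this is done via $d$-Harish-Chandra theory: the principal $\Phi_d$-series sits in $B_0$, and a bijection $\Irr(W)\to\cE(G,d)$ with $\phi(1)\equiv\pm\chi_\phi(1)\pmod{\Phi_d(q)}$ reduces the problem to finding a character of $p$-height~$1$ in the relative Weyl group~$W$ (for classical types, via the symmetric-group quotient of~$W$). On the Sylow side, Weir's wreath-product description of $P$ immediately yields a quotient $\ZZ/p^a\ZZ\wr\ZZ/p\ZZ$ with an irreducible character of degree~$p$.

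Your defining-characteristic outline is closer to what the paper does, but the determination of $\mh(\bU^F)$ is handled more uniformly than by citing tables for exceptional types: the key lemma shows, via restriction to rank-$2$ root subgroups, that any irreducible character of $\bU^F$ of degree below the claimed minimum must contain $[\bU^F,\bU^F]$ in its kernel and hence be linear. This argument works simultaneously for all types once $p>3$, with small-prime adjustments.
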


Brauer's height zero conjecture is included as the claim that $\mh(B)=\infty$
if and only if $\mh(D)=\infty$. Eaton and Moret\'o \cite{EM14} prove that
$\mh(D)\le\mh(B)$ for $p$-solvable groups, and
they furthermore checked their conjecture for sporadic groups, for the
symmetric groups and for the general linear groups $\GL_n(q)$ for the
prime~$p$ dividing $q$.
\par
It is the purpose of this paper to verify the conjecture in a number of further
instances. Relevant test cases are certainly furnished by the blocks of
nearly simple groups. Our main result is the following:

\begin{thm}   \label{thm:main}
 The Eaton--Moret\'o conjecture holds for the following blocks:
 \begin{enumerate}[\rm(1)]
  \item the principal block of any quasi-simple group;
  \item all $p$-blocks of quasi-simple groups of Lie type in characteristic~$p$;
  \item all unipotent blocks of quasi-simple exceptional groups of Lie type ;
   and
  \item all $p$-blocks of covering groups of an alternating or symmetric group.
 \end{enumerate}
\end{thm}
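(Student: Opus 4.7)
The plan is to treat the four parts of Theorem~\ref{thm:main} by case analysis through the classification of finite quasi-simple groups, exploiting that the parts overlap: (4) subsumes the alternating entries of (1), (2) subsumes the defining-characteristic entries of (1) and (3), and what remains of (1) and (3) is then purely cross-characteristic. I would proceed in the order (2), (4), then (1) and (3) together.

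The technical heart is part (2). For $G = \bG^F$ a finite reductive group in its defining characteristic $p$, the Sylow $p$-subgroup is, up to its $p'$-center, the unipotent radical $\bU^F$ of an $F$-stable Borel subgroup, so every $p$-defect group of $G$ is conjugate into $\bU^F$. My plan is first to determine $\mh(D)$ for each defect group $D$ via a uniform lower bound coming from the lower central series of $\bU^F$ (Kirillov's orbit method in good characteristic), complemented by the explicit construction of small-degree irreducible characters of $\bU^F$ type by type. I would then, for each $p$-block $B$ of $G$, exhibit an ordinary irreducible character in $B$ realising $\mh(D)$, using the parametrization of defining-characteristic blocks of reductive groups together with Harish--Chandra induction from Levi subgroups of suitable rank to produce candidates in each block.

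Part (4) I would handle combinatorially. By Olsson's formula, the $p$-part of the degree of the character of $S_n$ labelled by $\lambda$ is determined by the $p$-quotient of $\lambda$; since the $p$-blocks of $S_n$ are labelled by a $p$-core and a weight $w$, with defect group an iterated wreath product of $\ZZ/p\ZZ$ of order $p^w$, both $\mh(B)$ and $\mh(D)$ admit transparent combinatorial descriptions and equality follows by direct comparison. For $A_n$ a block either coincides with the corresponding $S_n$-block or splits into two of equal defect, and the conjecture transfers; for $2.S_n$ and $2.A_n$ I would use spin characters and bar partitions for odd $p$, noting that for $p=2$ the faithful characters lie in defect-zero blocks and the conjecture holds vacuously. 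For parts (1) and (3) in cross characteristic, the Sylow $\ell$-subgroup structure (Weir for classical types, generic Sylow theory for exceptional types) yields $\mh(P)$ directly, and a character of matching height in the principal (resp.\ unipotent) block is produced via Harish--Chandra induction from the trivial character (resp.\ a cuspidal unipotent character) of a minimal $e$-split Levi subgroup. Sporadic groups and their covers are handled by inspection of ATLAS tables, and the finitely many exceptional types relevant to (3) admit a direct check using Lusztig's generic degree formulas.

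The main obstacle I expect is in (2): the uniform determination of $\mh(\bU^F)$, and more generally of $\mh(D)$ for the various defect groups arising, across all Lie types and characteristics, particularly in bad characteristic and for the Suzuki and Ree groups and small-rank groups over small~$q$, does not reduce to a single conceptual statement and requires careful type-by-type analysis. A secondary obstacle is the block-side matching in~(2): pinpointing in each defining-characteristic block an ordinary irreducible character that realises $\mh(D)$ requires understanding how characters of small height distribute among the blocks, which is less automatic than in cross characteristic where more character-degree machinery is available.
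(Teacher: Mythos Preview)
Your overall architecture---reduce to (2), (4), then cross-characteristic for (1) and (3)---matches the paper's, but there are two concrete gaps in the plan for (2) and (4).

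For part~(2) you are missing the key structural input: by a theorem of Humphreys, every $p$-block of $\bG^F$ of positive defect has \emph{full} defect, so its defect group is a Sylow $p$-subgroup. There is thus only one defect group to analyse, not a family of them; your worry about ``the various defect groups arising'' is unfounded, and the ``secondary obstacle'' of distributing small-height characters among many blocks largely evaporates (the blocks are indexed by $\Irr(Z(G))$, and one only needs a single character of the right height per central character). On the group side, the paper does not use Kirillov's orbit method---which, as you note, breaks down in bad characteristic---but instead works directly with root subgroups: one shows that for any rank-two parabolic subquotient the non-linear character degrees are at least $q$ (with the expected corrections for bad primes), and then propagates this through the commutator structure of $\bU^F$. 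On the block side, the paper uses Lusztig's explicit degree polynomials for unipotent characters rather than Harish--Chandra induction; the point is that $\chi(1)=\frac{1}{d}q^a f(q)$ with $d$ divisible only by bad primes, so the $a$-invariant controls the height directly, and Jordan decomposition transports this to non-principal blocks.

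For part~(4), your claim that for $p=2$ the faithful characters of $2.\fS_n$ and $2.\fA_n$ lie in defect-zero blocks is false: since the centre has order~$2$, a $2$-group, every $2$-block of the double cover contains both faithful and non-faithful characters and has the same defect as the block of $\fS_n$ (resp.\ $\fA_n$) it covers. The correct argument is simply to inflate height-$1$ characters of the quotient through the canonical surjection; nothing about spin characters is needed when $p=2$.
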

This is shown in Theorems~\ref{thm:B0}, \ref{thm:Liep}, \ref{thm:exc}
and~\ref{thm:alt}
respectively. We also obtain further partial results for groups of Lie type in
non-defining characteristic, see Proposition~\ref{prop:smallexc}. The case of
groups of Lie type in their defining characteristic seems particularly
relevant for the conjecture since there $\mh(B)$ and $\mh(D)$ can take
arbitrarily large positive values. As a side result of independent interest
we determine
the smallest non-trivial character degrees of Sylow $p$-subgroups of groups
of Lie type in characteristic~$p$, see Proposition~\ref{prop:defgr}.

\section{Alternating groups} \label{sec:alt}

In this section we consider the quasi-simple coverings of alternating groups.

\begin{thm}   \label{thm:alt}
 Let $G$ be a symmetric or alternating group, or a Schur extension of one of
 these groups, and $p$ a prime. Then $\mh(B)=\mh(D)=1$ or
 $\mh(B)=\mh(D)=\infty$ for any $p$-block $B$ of $G$.
\end{thm}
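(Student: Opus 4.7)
The plan is to separate on whether the defect group $D$ of $B$ is abelian, since the two cases give the two alternatives in the conclusion. If $D$ is abelian, the Height Zero Conjecture (known in complete generality for symmetric and alternating groups and their Schur covers) gives $\mh(B)=\infty$, while $\mh(D)=\infty$ is immediate. So we are reduced, in the non-abelian defect case, to exhibiting both an irreducible character of $D$ of degree $p$ and a character of $B$ of height exactly $1$.

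For $G=S_n$: the $p$-blocks are parametrized by pairs $(\mu,w)$ with $\mu$ a $p$-core and $w\ge 0$ the weight, and the defect group of such a block is a Sylow $p$-subgroup $P$ of $S_{pw}$. The group $P$ is non-abelian precisely when $w\ge p$, in which case $P$ contains the Sylow subgroup $C_p\wr C_p$ of $S_{p^2}$. Since $C_p\wr C_p$ has irreducible degrees $\{1,p\}$, extending one of these degree-$p$ characters to $P$ trivially on the complementary wreath factors yields an irreducible of $P$ of degree exactly $p$, so $\mh(D)=1$. For the character side, one derives from the hook-length formula and the bijection between $p$-hooks of $\lambda$ and boxes of its $p$-quotient $(\lambda^{(0)},\ldots,\lambda^{(p-1)})$ of sizes $w_0,\ldots,w_{p-1}$ the identity
\[
h(\chi^\lambda)=\nu_p\binom{w}{w_0,\ldots,w_{p-1}}+\sum_{i=0}^{p-1}\nu_p(f^{\lambda^{(i)}}).
\]
Taking each $\lambda^{(i)}=(w_i)$ to be a single row kills the second sum, reducing the task to producing a composition of $w$ with exactly one $p$-adic carry. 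This is immediate for $w=p$ (the composition $(1,1,\ldots,1)$ gives $\nu_p(p!)=1$), and for $w>p$ a short case distinction on the base-$p$ digits of $w$ together with Kummer's theorem does the job.

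For $G=A_n$: every $p$-block is the restriction of an $S_n$-block or one of two halves of a sign-stable $S_n$-block. When $p$ is odd the Sylow $p$-subgroups of $A_n$ and $S_n$ coincide and restriction to $A_n$ preserves $p$-parts of character degrees, so the previous argument transfers unchanged. At $p=2$ the defect drops by one and one must track the sign involution $\lambda\mapsto\lambda'$: restrictions of $\chi^\lambda$ with $\lambda=\lambda'$ lose a factor of $2$ in degree. The formula above then produces in $B$ either a non-self-conjugate $\lambda$ of $S_n$-height $1$, or a self-conjugate $\lambda$ of $S_n$-height $2$, each giving a height-$1$ character of the $A_n$-block. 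The Schur extensions $2.S_n^\pm$, $2.A_n$ and the exceptional triple covers of $A_6,A_7$ split into ordinary blocks (characters trivial on the centre, handled above) and faithful spin blocks. The latter I would handle via Morris's bar-hook-length formula: spin blocks are indexed by $\bar p$-bar cores of strict partitions and bar-weights, and a strict analogue of the height identity above expresses the spin height as a multinomial $p$-adic valuation plus a sum of bar-degree valuations, yielding $\mh(B)=1$ again by a Kummer argument.

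The main obstacle will be the spin case: the bar-quotient of a strict partition is a mixed tuple (one distinguished bar-component plus ordinary partition components), the spin defect groups live in $2.S_{pw}$ rather than $S_{pw}$, and at $p=2$ spin blocks are separated from ordinary ones by the central sign. Each of these subtleties must be verified carefully against the predicted form of the height identity. By contrast, the $S_n$ and $A_n$ cases reduce almost entirely to Kummer's theorem on the $p$-adic valuation of multinomial coefficients once the classical height identity above is in place.
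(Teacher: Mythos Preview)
Your strategy for $\fS_n$ and $\fA_n$ is sound and is essentially what the paper does by citation: the $\fS_n$ case is exactly \cite[Thm.~4.3]{EM14} (which is the hook-length/Kummer argument you sketch), and the $\fA_n$ case is handled in the paper via Olsson's counting results \cite[Props.~12.3,~12.7]{Ol93} rather than by your explicit self-conjugate analysis, but either route works. For the spin covers at odd $p$ the paper again cites Olsson (\cite[Props.~13.3,~13.19 and the note on p.~86]{Ol93}), which directly count height-$1$ characters in spin blocks, whereas you propose to reconstruct this via the bar-hook-length formula; your route is viable but longer, and your worry that ``spin defect groups live in $2.\fS_{pw}$'' is unfounded: since $p$ is odd the projection $\theta$ restricts to an isomorphism on $p$-subgroups, so the defect groups are literally Sylow $p$-subgroups of $\fS_{pw}$, identical to the ordinary case.

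There is, however, a genuine misconception in your treatment of $p=2$ for the double covers. Your assertion that ``at $p=2$ spin blocks are separated from ordinary ones by the central sign'' is false: since $Z(\widetilde G)\cong\ZZ/2\ZZ$ is a $2$-group, every $2$-block of $\widetilde G$ contains both spin and non-spin characters, and covers a unique $2$-block of $G=\widetilde G/Z$. The paper exploits precisely this: it inflates a height-$1$ character of the covered block of $\fS_n$ (or $\fA_n$) and of its defect group through the quotient $\theta$, obtaining $\mh(\widetilde B)=\mh(\widetilde D)=1$ with no spin analysis whatsoever. Your proposed bar-combinatorics programme at $p=2$ is therefore built on a mistaken block decomposition and should be replaced by this lifting argument. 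Finally, the exceptional covers $3.\fA_6$, $3.\fA_7$, $6.\fA_6$, $6.\fA_7$ do not fit either framework and require the short explicit character-table check that the paper performs; you mention them but give no mechanism for handling them.
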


\begin{proof}
The case of the symmetric group is proved in~\cite[Thm.~4.3]{EM14}. Let $B'$
be a $p$-block of $\fA_n$ of $p$-weight $w>0$, and $B$ be a $p$-block of
$\fS_n$ with defect group $D$ covering $B'$. By~\cite[Thm.~9.17]{Nav},
$D'=D\cap \fA_n$ is a defect group of $B'$. In particular, when $p$ is odd,
$D'=D$ and we obtain the result from the symmetric groups case using
\cite[Prop.~12.3]{Ol93}. Suppose $p=2$. Then $D'$ is a subgroup of
index $2$ in $D$ isomorphic to a Sylow $2$-subgroup of $\fA_{2w}$ by
\cite[Prop.~11.2]{Ol93}. If $w\leq 2$, then $D'$ is abelian, and
$\mh(D')=\infty=\mh(B')$. Suppose $w\geq 3$. By~\cite[Prop.~12.7]{Ol93},
we have $\mh(B')=1$. We now will show that $\mh(D')=1$. For $w=3$, $D'$ is
isomorphic to a Sylow $2$-subgroup of $\fA_6$, which has an irreducible
character of degree~$2$, as required. Let $w>3$. By Clifford theory, it is
sufficient to show that $D$ has an irreducible character $\chi$ of degree~$2$
such that $\varepsilon\chi\neq \chi$, where $\varepsilon$ is the restriction
of the sign character of $\fS_n$ to $D$. However, $w>3$ implies that $2w\geq 8$,
and if we write $2w=\sum_i a_i2^i$ with $a_i\in\{0,1\}$, then there is
$k\geq 3$ such that $a_k\neq 0$. In particular, $D$ (which is isomorphic to
a Sylow $2$-subgroup of $\fS_{2w}$ described, for example,
in~\cite[p.~75]{Ol93}) possesses a quotient isomorphic to
$(\ZZ/2\ZZ\wr\ZZ/2\ZZ)\wr\ZZ/2\ZZ$ which has an irreducible character of
degree $2$. The result follows.

Suppose $\widetilde{G}=\widetilde{\fS}_n$ is a Schur extension of $G=\fS_n$,
and denote by $\theta:\widetilde{G}\rightarrow G$ the surjective homomorphism
of groups with kernel $Z(\widetilde{G})\simeq\ZZ/2\ZZ$.  First, assume that
$p>2$. Let $w>0$ be the $p$-weight of $B$. By~\cite[Prop.~13.3]{Ol93},
$D$ is conjugate in $\widetilde{\fS}_n$ to a Sylow $p$-subgroup $P$ of
$\widetilde{\fS}_{pw}$ and since $p$ is odd, $\theta|_P$ induces an isomorphism
between $P$ and a Sylow $p$-subgroup of $\fS_{pw}$. In particular, by the
proof of~\cite[Thm.~4.3]{EM14}, either $D$ is abelian and $\mh(D)=\infty$,
or $D$ is non-abelian and $\mh(D)=1$. We then conclude
with~\cite[note p.~86]{Ol93}.  Now, let $B'$ be a $p$-block of
$\widetilde{\fA}_n$ of $p$-weight $w>0$ and sign $\sigma$ (where $\sigma$ is
the sign as in~\cite[p.~45]{Ol93} of the partition labelling the core of
$B'$), with non-abelian defect group $D$.  Then the unique $p$-block of
$\widetilde{\fS}_n$ covering $B'$ has defect group $D$. Thus, $\mh(D)=1$.
Let $B$ be any $p$-block of $\widetilde{\fS}_m$ (for some positive integer $m$)
of weight $w$ and sign $-\sigma$. Then, by~\cite[Prop.~13.19]{Ol93},
we conclude that $B$ and $B'$ have the same number of characters of
$p$-height $1$. So, $\mh(B')=1$, as required.

Finally, suppose that $p=2$. Write $\widetilde{G}=\widetilde{\fS}_n$ and
$G=\fS_n$ (resp.~$\widetilde{G}=\widetilde{\fA}_n$, and $G=\fA_n$).
Since $Z(\widetilde{G})$ is a $2$-group, \cite[Thm.~9.9(b)]{Nav} gives that
if $\widetilde{B}$ is a $2$-block of $\widetilde{G}$ with defect group
$\widetilde{D}$, then there is a $2$-block $B$ of $G$ with defect group
$D=\theta(\widetilde{D})$. Assume $D$ is non-abelian. Inflating irreducible
characters of $B$ and $D$ of degree $2$ through $\theta$, we obtain
$\mh(\widetilde{B})=1=\mh(\widetilde{D})$.
So we now consider the case that $D$ is abelian. We remark that in this
case, $D$ is either trivial, or isomorphic to $\ZZ/2\ZZ$. In
particular, $\widetilde{D}$  is abelian, and
$\mh(\widetilde{D})=\infty=\mh(\widetilde{B})$.

To complete the proof, we now consider the cases of $6.\fA_6$ and $6.\fA_7$.
Let $G=6.\fA_6$, and $p\in\{2,3\}$. Then we can check that the $p$-blocks
of $G$ have either maximal defect (and have an irreducible character of
height $1$) or cyclic defect. Since a Sylow $p$-subgroup has an irreducible
character of height $1$, we deduce the result.

Let $G=6.\fA_7$. If $p=2$, then $G$ has three $2$-blocks with maximal defect,
one $2$-block with defect $8$, and three $2$-blocks with cyclic defect.
Any $2$-block with maximal defect and any Sylow $2$-subgroup of $G$ has an
irreducible character of height $1$. Furthermore, if $B$ is the $2$-block
with defect $8$, then $B$ has a character of degree $6$ (i.e., of height $1$)
and a defect group $D$ of $B$ is a non-abelian group of order $8$, which
has an irreducible character of degree $2$. Thus, $\mh(D)=1=\mh(B)$.
If $p=3$, then the $3$-blocks of $G$ are either of maximal defect (and have
characters of height $1$) or of abelian defect. Since a Sylow $3$-subgroup
of $G$ has a character of degree $3$, the result follows.

Finally, for $G=3.\fA_6$ and $G=3.\fA_7$ the $2$-blocks are either of maximal
defect (and contain a character of height $1$), or of abelian defect.
Furthermore, a Sylow $2$-subgroup of $G$ possesses a character of degree $2$.
This completes the proof.
\end{proof}

\section{Groups of Lie-type: defining characteristic} \label{sec:Liep}

\subsection{Character degrees}
Throughout, we consider the following setting: $\bG$ is a simple linear
algebraic group over the algebraic closure of a finite field,
and $F:\bG\rightarrow\bG$ is a Steinberg endomorphism on $\bG$, with finite
group of fixed points $G:=\bG^F$. We denote by $q$ the absolute value of all
eigenvalues of $F$ on the character group of an $F$-stable maximal torus of
$\bG$. Thus, if $\delta$ is the smallest integer such that $\bG$ is split
with respect to $F^\delta$, then $\bG$ is defined over the field of size
$q^\delta$. We write $p$ for the prime dividing $q^\delta$. It is well-know
that all quasi-simple finite groups of Lie type, except for the finitely many
exceptional covering groups and for the Tits simple group $\tw2F_4(2)'$, arise
as a central factor group of a suitable $G$ as above, by choosing $\bG$ to
be of simply connected type.

We will make use of the following well-known property of character degrees
of groups of Lie type, which essentially follows from the work of Lusztig:

\begin{prop}   \label{prop:degs}
 Let $G=\bG^F$ be a finite quasi-simple group of Lie type and $q$ as above.
 Assume that $G$ is not of Suzuki- or Ree-type. Then the degree of any
 $\chi\in\Irr(G)$ is of the form
 $$\frac{1}{cd}f(q)$$
 for some monic polynomial $f\in\ZZ[X]$, and positive integers $c,d$, with
 $(c,q)=1$, where $c$ is a divisor of $|Z(\bG)|$ and $d$ is only divisible by
 bad primes for $\bG$.
\end{prop}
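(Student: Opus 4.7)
The plan is to apply Lusztig's Jordan decomposition of irreducible characters. This partitions $\Irr(G)$ into rational Lusztig series $\cE(G,s)$ indexed by semisimple conjugacy classes $(s)$ in the Langlands dual group $G^*=\bG^{*F}$. By Lusztig's theorem (extended by Digne--Michel to handle disconnected centralizers), for each $\chi\in\cE(G,s)$ one has the degree factorization
\[
\chi(1)=[G^*:C_{\bG^*}(s)^F]_{p'}\cdot\psi(1),
\]
where $\psi\in\Irr(C_{\bG^*}(s)^F)$ lies over a unipotent character $\lambda$ of the finite reductive group $C^F$, writing $C:=C_{\bG^*}(s)^\circ$. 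The strategy is then to show each factor has the required shape.

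For the index factor: both $|G^*|$ and $|C^F|$ are polynomial orders of connected reductive groups, each equal to $q^M$ times a monic integer polynomial in $q$ (a product of cyclotomic polynomials); excluding Suzuki--Ree types is precisely what ensures $q$ itself is a rational integer rather than a square root thereof. The $p'$-part of the class size of $s$ equals $|G^*|/(|C^F|\cdot|A_F(s)|)$ up to a power of $q$, where $A_F(s):=C_{\bG^*}(s)^F/C^F$ embeds into the component group $A(s):=C_{\bG^*}(s)/C_{\bG^*}(s)^\circ$. Taking $\bG$ simply connected (as one may for quasi-simple $G$), $|A(s)|$ divides $|Z(\bG)|$ by the standard duality between the center of the simply connected form and the component group of centralizers in the adjoint dual. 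Hence the index factor has the shape $f_1(q)/c_1$ with $f_1$ a monic integer polynomial and $c_1\mid|Z(\bG)|$ coprime to $q$.

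For the second factor, Clifford theory with respect to $C^F\trianglelefteq C_{\bG^*}(s)^F$ gives $\psi(1)=e\,\lambda(1)$ with $e:=[C_{\bG^*}(s)^F:T_\lambda]$ dividing $|A(s)|$. By Lusztig's explicit classification of unipotent character degrees of connected reductive groups, any such degree takes the form
\[
\lambda(1)=\frac{1}{d}\,q^{a}\,Q(q)
\]
with $Q\in\ZZ[X]$ a product of cyclotomic polynomials (hence monic with integer coefficients) and $d$ a positive integer all of whose prime divisors are bad for $C$. Since $C$ is a pseudo-Levi subgroup of $\bG^*$, its root system is a sub-root-system of that of $\bG^*$, so every bad prime of $C$ is bad for $\bG$ as well.

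Multiplying the two factors yields $\chi(1)=f(q)/(cd)$ with $f$ monic in $\ZZ[X]$ (a product of monic integer polynomials is monic integer), $c$ absorbing the component-group contributions from both factors, and $d$ inheriting the bad-prime denominator from $\lambda(1)$. The main obstacle is careful bookkeeping in the disconnected centralizer case: tracking how $|A(s)|$ partitions between the index factor (via $A_F(s)$) and the second factor (via Clifford theory) to ensure the resulting $c$ still divides $|Z(\bG)|$ and remains coprime to $q$.
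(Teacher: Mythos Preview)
Your proposal is correct and follows essentially the same route as the paper: reduce to unipotent character degrees via Lusztig's Jordan decomposition, use that the index of the connected centralizer is a monic polynomial in $q$, and handle the disconnected part by Clifford theory, with the component group furnishing the factor $c\mid|Z(\bG)|$. You are in fact slightly more explicit than the paper in two places (noting that bad primes of the pseudo-Levi $C$ are bad for $\bG$, and tracking $A(s)$ versus $A_F(s)$); one small imprecision is the formula $e=[C_{\bG^*}(s)^F:T_\lambda]$, which omits the possible ramification index when $A_F(s)$ is non-cyclic (type $D_{2n}$), but the needed conclusion that $\psi(1)/\lambda(1)$ divides $|A(s)|$ still holds there, so the argument goes through.
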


A quite similar statement holds for the Suzuki and Ree groups, but since it
is slightly more technical to formulate, and we will not need it here, we
do not give it.

\begin{proof}
Let us first consider the unipotent characters of $G$. Here, Lusztig has given
explicit formulas for their degrees: for each unipotent character $\chi$ of
$G$ there is a polynomial $f\in\ZZ[X]$ which is product of cyclotomic
polynomials times a power $X^a$ of $X$ and a positive integer $d$ which is
either a power of~2 or a divisor of~120, divisible only by bad primes for
$\bG$, such that $\chi(1)=\frac{1}{d}f(q)$ (see e.g.~\cite[\S13]{Ca}). (Here,
$a$ is the $a$-invariant of the family to which $\chi$ belongs.)
So the claim holds in this case.  \par
In general, by Lusztig's parameterization, any irreducible character $\chi$ of
$G$ lies in the Lusztig-series $\cE(G,s)$ of a semisimple element $s\in G^*$,
where $G^*:={\bG^*}^F$ with $\bG^*$ Langland's dual to $\bG$ with a compatible
Steinberg endomorphism also denoted by $F$. Lusztig's Jordan decomposition
then yields that
$$\chi(1)=[G^*:C_{G^*}(s)]_{p'}\,\psi(1)$$
for a unipotent character $\psi$ of (the possibly disconnected group)
$C_{G^*}(s)$, that is, an irreducible character whose restriction to
the connected component $C_{G^*}^\circ(s)$ is unipotent. Now
$C_{G^*}(s)/C_{G^*}^\circ(s)$ is isomorphic to a subgroup of the fundamental
group of $G^*$, hence abelian, of order prime to $q$, isomorphic to a subgroup
of $Z(\bG)$ (see \cite[Prop.~14.20]{MT}). Moreover, it is cyclic unless
the quasi-simple group $G$ is of type $D_{2n}$. Since the index of the
connected subgroup $C_{G^*}^\circ(s)$ in $G^*$ is given by a monic polynomial
in $q$, this shows the claim using the previous statement on unipotent
character degrees and Clifford theory.
\end{proof}

\subsection{$\mh(B)$ in the defining characteristic}
We now determine the smallest positive height in blocks of groups of Lie
type. Note that groups of type $A_1$ have abelian Sylow $p$-subgroups, so
we may exclude them here. For $G=\bG^F$ a quasi-simple group of Lie type in
characteristic $p$ but not of type $A_1$, with $q$ as above, we define the
positive integer $m(G,p)$ by
$$p^{m(G,p)}:=\begin{cases}
   \frac{1}{2}q& \text{ if $G=B_n(q),C_n(q),F_4(q),G_2(q)$ with $q=2^f>2$},\\
   \frac{1}{3}q& \text{ if $G=G_2(q)$ with $q=3^f>3$},\\
   \frac{1}{\sqrt{2}}q& \text{ if $G=\tw2B_2(q^2),\tw2F_4(q^2)$ with $q^2=2^{2f+1}>2$},\\
   \frac{1}{\sqrt{3}}q& \text{ if $G={}^2G_2(q^2)$ with $q^2=3^{2f+1}>3$},\\
   q& \text{ else}.
 \end{cases}$$

\begin{prop}   \label{prop:Liep}
 Let $G=\bG^F$ be a quasi-simple group of Lie type not of type $A_1$.
 Let $B$ be a $p$-block of $G$ of positive defect. Then $\mh(B)=m(G,p)$.
\end{prop}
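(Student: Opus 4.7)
The plan is to reduce the height computation to a $p$-adic valuation problem via Humphreys' theorem, apply the structural Proposition~\ref{prop:degs}, and then exhibit small-degree characters while bounding denominators.

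First I would invoke the theorem of Humphreys that in defining characteristic every $p$-block $B$ of $G$ with positive defect has a Sylow $p$-subgroup of $G$ as its defect group. Consequently $\hgt(\chi) = v_p(\chi(1))$ for $\chi\in\Irr(B)$, and $\mh(B)$ becomes the minimum positive value of $v_p(\chi(1))$ on $\Irr(B)$. A short additional argument---passing to the simply connected cover, where there is essentially only one block of positive defect, then twisting by central linear characters---ensures the minimum is independent of the chosen block $B$, so it suffices to compute the global minimum over $\Irr(G)$.

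Next, for $G$ not of Suzuki or Ree type, Proposition~\ref{prop:degs} gives $\chi(1) = \tfrac{1}{cd}f(q)$ with $(c,q)=1$ and $d$ divisible only by bad primes. Lusztig's explicit unipotent degree formulas show that $f$ is (up to sign) of the form $X^a\prod_i \Phi_{n_i}(X)$, and since $\Phi_n(0)\in\{\pm 1\}$ for $n\geq 1$ each $\Phi_n(q)$ is coprime to $q$, hence to $p$. Thus $v_p(\chi(1)) = a\cdot v_p(q) - v_p(d)$, positivity forces $a\geq 1$, and the sought minimum equals $v_p(q) - \max v_p(d)$, the maximum taken over characters with $a$-invariant~$1$.

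For the upper bound I would exhibit unipotent characters of $a$-invariant~$1$ with the correct denominator: in the generic case ($p^{m(G,p)}=q$) the reflection character (with $d=1$) gives $v_p(\chi(1))=v_p(q)=m(G,p)$; in each bad-prime exception listed in the definition of $m(G,p)$, a specific small-degree unipotent character from Lusztig's tables has generic degree with denominator exactly $2$ or $3$, giving $v_p(\chi(1))=v_p(q)-1=m(G,p)$. For the lower bound, Lusztig's Jordan decomposition $\chi(1)=[G^*:C_{G^*}(s)]_{p'}\,\psi(1)$ reduces the problem to bounding $v_p(d)$ for unipotent characters $\psi$ of centralizers of semisimple elements $s\in G^*$; since bad primes of such a centralizer are bad for $\bG$, this is a finite inspection across Lusztig's tables. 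The Suzuki and Ree groups $\tw2B_2(q^2)$, $\tw2G_2(q^2)$, $\tw2F_4(q^2)$ are handled separately using the complete generic character tables of Suzuki, Ward and Malle: in each case a character of degree $\tfrac{1}{\sqrt p}q\cdot g(q)$ with $g(q)$ coprime to $p$ realises $m(G,p)$, and inspection of the tables shows no smaller positive $p$-adic valuation occurs.

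The main obstacle will be the case-by-case verification underlying the lower bound: one must check across all Lie types, bad primes, and twisted forms that the denominator $d$ in the generic degree of a unipotent character of $a$-invariant~$1$ has $p$-part at most that prescribed by $m(G,p)$. The subtlety is greatest in the exceptional types $F_4$, $E_6$--$E_8$, and in the Suzuki/Ree families where half-integer denominators appear in Lusztig's formulas and must be tracked carefully through Jordan decomposition for arbitrary centralizers of semisimple elements.
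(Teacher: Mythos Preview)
Your overall strategy---Humphreys' theorem, Proposition~\ref{prop:degs}, the reflection character for the upper bound, and Jordan decomposition for the lower bound---matches the paper's. But there is a genuine gap in your treatment of non-principal blocks. You write that ``passing to the simply connected cover, where there is essentially only one block of positive defect, then twisting by central linear characters'' makes the minimum block-independent. Both steps fail. When $\bG$ is simply connected, Humphreys' theorem gives one block of positive defect for \emph{each} character of $Z(G)$, so there are $|\Irr(Z(G))|$ such blocks, not one. And since a quasi-simple group is perfect, $G$ has no non-trivial linear characters, so there is nothing to twist by; you cannot move characters between blocks this way. Your global minimum over $\Irr(G)$ therefore gives only the lower bound $\mh(B)\geq m(G,p)$ for every block, not the upper bound. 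The paper supplies the missing upper bound for each non-principal block $B_\psi$ by exhibiting (Table~\ref{tab:ss}) a semisimple element $s\in G^*$ with $\cE(G,s)\subseteq B_\psi$ and with $C_{G^*}(s)$ large enough that the Jordan correspondent of a suitable unipotent character of $C_{G^*}(s)$ has height exactly $m(G,p)$.

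A secondary imprecision: you assert that the minimum positive height equals $v_p(q)-\max v_p(d)$ with the maximum taken over characters of $a$-invariant~$1$. This presupposes that no character with $a\geq 2$ achieves a smaller positive height, which is not automatic: one needs to know that $v_p(d)$ does not grow too fast with $a$. The paper makes this explicit, observing for types $B_n,C_n,D_n,\tw2D_n$ that the $a$-value of any unipotent character is at least the exponent of~$2$ in the denominator of its degree, and handling the exceptional types by direct inspection of the tables in \cite[\S13]{Ca}. Your lower-bound paragraph gestures at such a finite check but does not isolate this inequality, which is the actual content.
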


\begin{proof}
By a result of Humphreys \cite{Hum} the $p$-blocks of $G$ of non-zero defect
are in bijection with the irreducible characters of $Z(G)$, and all have full
defect.  \par
Thus, the Suzuki and Ree groups have just one non-trivial block, and the claim
for them can be checked from the know character tables \cite{Suz,Wa,Ma90}. Note
that none of the groups $\tw2B_2(2),{}^2G_2(3),\tw2F_4(2)$ is quasi-simple.
Also, it can be checked directly from the character tables that
$\Sp_6(2),G_2(3)$ and $F_4(2)$ have a character of height~1. So from now on
we will assume that $G$ is none of the aforementioned groups. \par
Let us first discuss the principal $p$-block $B_0$. Since unipotent characters
have $Z(G)$ in their kernel, they all lie in $B_0$. By
Proposition~\ref{prop:degs} the degree of any unipotent character $\chi$ of
$G$ is of the form $\frac{1}{d}q^a f(q)$, where $a$ is the $a$-invariant of
the family of the Weyl group $W$ of $G$ to which $\chi$ is attached, and $f$
is a product of cyclotomic polynomials in $q$.  \par
For $B_n(2)$ and $C_n(2)$ with $n>3$, the unipotent character parametrized by
the symbol $(0,2\mid n-1)$ has degree $2(4^n-1)(2^{n-1}+1)(2^{n-3}+1)/15$,
hence height~1.
For the other groups $G$ of split type, we take for $\chi$ the unipotent
principal series character of $G$ indexed by the reflection character $\rho$
of $W$. Since
$\rho$ occurs in the first exterior power of the reflection representation
of $W$, its $b$-invariant equals~1 by definition. By a result of Lusztig,
the $a$-invariant is always less or equal to the $b$-invariant. On the other
hand the $a$-invariant is strictly positive for any family except the one
containing the trivial character. (Alternatively, use that the reflection
character is special and so $a$- and $b$-invariant agree,see
e.g.~\cite[\S12]{Ca}.) Thus $\chi(1)=\frac{1}{d}qf(q)$, with $d$ divisible
by bad primes only and $f(q)$ prime to $p$.
\par
We claim that this character has the desired height. This is clear for groups
of type $A$, since there are no bad primes, and more generally if $p$ is not
a bad prime for $\bG$. The explicit formulas in \cite[\S13]{Ca} show that for
types $B_n$, $C_n$ and $F_4$ we have $d=2$, for $G_2$ we have $d=6$, and
$d=1$ for all other untwisted types, so the claim holds.
\par
For the twisted groups of types
$\tw2A_n$, $\tw2D_{2n+1}$ and $\tw2E_6$, the degrees of unipotent characters
are obtained from those in the corresponding untwisted type by replacing $q$
by $-q$ in the degree polynomial, and adjusting signs (the so-called Ennola
duality), so the claim follows. For type $\tw2D_n$ with $n$ even, the
unipotent character parametrized by the symbol $(1,n-1\mid\emptyset)$ has
degree $q(q^n+1)(q^{n-2}-1)/(q^2-1)$. Finally, for $\tw3D_4(q)$ the unipotent
character $\phi_{1,3}''$ has degree $q(q^4-q^2+1)$.
\par
We have thus in all cases exhibited a (unipotent) character of the asserted
height. We next claim that the given characters have minimal $p$-height among
unipotent characters. This is again clear for types $A_n$ and $\tw2A_n$ by
Proposition~\ref{prop:degs}. For the exceptional types it follows by inspection
from the lists in \cite[\S13]{Ca}. For $B_n,C_n,D_n$ and $\tw2D_n$ the
formulas in loc.~cit.~show that the $a$-value of a unipotent character is
always at least the exponent of~2 in the denominator of the degree. This
settles the case for unipotent characters.
\par
Now let $\chi$ be an arbitrary character of $G$ of positive height. By the
degree formula in Proposition~\ref{prop:degs} its height is the same as that
of some unipotent character of the centralizer of some semisimple element of
the dual group $G^*$. But unipotent characters of products are just obtained
as products of the unipotent characters of the factors. Moreover, any
centralizer of a semisimple element in a group of simply laced type is again
of simply laced type. Thus the previous result on unipotent characters shows
that in all types the non-zero heights of non-unipotent characters are not
smaller than those of unipotent characters. This completes the proof for the
principal block.
\par
Now let $\psi$ denote a non-trivial character of $Z(G)$ and $B_\psi$ the
corresponding $p$-block of $G$. Note that by the first part since $|Z(G)|>1$
we are in the case where $p^{\mh(B_0)}=q$. For each type we display in
Table~\ref{tab:ss} the centralizer of a semisimple element $s$ in the dual group
$G^*$ such that all elements in the Lusztig series $\cE(G,s)$ have central
character $\psi$, hence lie in $B_\psi$.

\begin{table}[htbp]
\caption{Semisimple elements}   \label{tab:ss}
\[\begin{array}{|r|r|l|}
\hline
 G& C_{G^*}(s)& \text{conditions}\\
\hline
   \SL_n(q)& \GL_{n-1}(q)& (n,q-1)>1\\
   \SU_n(q)& \GU_{n-1}(q)& (n,q+1)>1\\
 \Spin_{2n+1}(q)& C_{n-1}& q\text{ odd}\\
     \Sp_{2n}(q)& \GO_{2n}^\pm(q)& q\text{ odd}\\
 \Spin_{2n}^\pm(q)& B_{n-1}& q\text{ odd}\\
     E_6(q)& D_5& q\equiv1\pmod3\\
 \tw2E_6(q)& \tw2D_5& q\equiv2\pmod3\\
     E_7(q)&     E_6& q\equiv1\pmod4\\
     E_7(q)& \tw2E_6& q\equiv3\pmod4\\
\hline
\end{array}\]
\end{table}

Let $\chi$ be the character in $\cE(G,s)$ corresponding under Lusztig's Jordan
decomposition to the unipotent character $\chi'$ of $C_{G^*}(s)$ constructed
above, with $\chi'(1)_p=q$. But then we also have $\chi(1)_p=q$. The proof is
complete.
\end{proof}

\begin{rem}
It is shown in \cite[Thm.~C]{EM14} that under Dade's projective conjecture
we always have $\mh(B)\ge\mh(D)$. Thus, assuming that deep conjecture, the
part of the above proof in which we show that the given characters have
minimal $p$-height could be omitted in view of the subsequent
Proposition~\ref{prop:defgr}.
\end{rem}

\subsection{$\mh(D)$ in the defining characteristic}  \label{subsec:2.2}
We now turn to the heights in a Sylow $p$-subgroup.
Let $\bT$ be an $F$-stable maximal torus of $\bG$ contained in an $F$-stable
Borel subgroup $\bB$ of $\bG$ and let $\bU$ be the unipotent radical of $\bB$.
We denote by $\Phi$ the root system of $\bG$ with respect to $\bT$, and by
$\Phi^+$ the set of positive roots of $\Phi$ with respect to $\bB$.
Write $\Delta$ for the corresponding simple roots.
For any $\al\in\Phi^+$, we denote by $\bX_\al$ the corresponding root subgroup
in $\bU$ normalized by $\bT$, and we choose an isomorphism
$x_\al:\overline{\FF}_p^+\rightarrow \bX_\al$. Now $F$ acts on the subgroups
$\bX_\al$, which induces an action of $F$ on $\Phi$ and on $\Delta$, and that
we extend by linearity to the space $V=\RR\Phi$. The resulting map is $q\phi$,
where $\phi$ is an automorphism of $V$ of finite order. We recall the setup
from \cite[\S23]{MT}. Write
$\pi:V\rightarrow V^{\phi}$ for the projection onto the subspace $V^{\phi}$.
We define an equivalence relation $\sim$ on $\Sigma=\pi(\Phi)$ by setting
$\pi(\al)\sim\pi(\bt)$ if and only if there is some positive $c\in\RR$ such
that $\pi(\al)=c\pi(\bt)$, and we let $\widehat{\Phi}$ be the set of equivalence
classes under this relation. For $\al\in\Phi$, we write $\wal$ for the class
of $\pi(\al)$ and define
\begin{equation}   \label{eq:thetadef}
  \theta:\Phi^+\rightarrow \widehat{\Phi},\ \al\mapsto\wal.
\end{equation}
Recall that $\Phi_\wal=\theta^{-1}(\wal)$ is an $F$-stable set of positive
roots of the root system $\pm\Phi_\wal$ (see~\cite[Thm.~2.4.1]{GLS}). We
sometimes also write $\Phi_{\pi(\al)}$ for $\Phi_\wal$. From now on, we assume
that we made the same choices as in \cite[Rem.~1.12.10]{GLS}.
For $\al\in\Phi^+$,
we define $\bX_\wal=\prod_{\bt\in\Phi_\wal}\bX_\bt$, and $X_\wal=\bX^F_\wal$
the corresponding root subgroup of $G$. In the following, to simplify the
notation, we sometimes also denote $X_\wal$ by $X_{\pi(\al)}$. The different
possible roots subgroups are described in~\cite[Table~2.4]{GLS}. For any
$\al\in\Phi^+$, when $\Phi_\wal$ is of type $A_1$ (resp.~$A_1\times A_1$),
we label the elements of $X_\wal$ by $x_\wal(t)$ for $t\in\FF_q$
(resp.~$t\in\FF_{q^2}$). When $\Phi_\wal$ is of type $A_2$, we label the
elements of $X_\wal$ by $x_\wal(t,u)$ with $t,\,u\in\FF_{q^2}$ such that
$t^{q+1}=\eps(u+u^q)$, where $\eps$ is a sign depending only on
$\wal$. For any representative $\gamma$ of $\wal$, we also denote $x_\wal$
by $x_{\gamma}$. Furthermore, by~\cite[Thm.~2.3.7]{GLS},
$P=\prod_{\{\wal,\,\al\in\Phi^+\}}X_\wal$ is a Sylow $p$-subgroup of $G$.
\medskip

Now, we consider $\bG=\SL_3(\overline{\FF}_p)$, endowed with the
$\FF_{q^2}$-structure corresponding to the Steinberg map $F$ acting by raising
all entries of a matrix to the $q^2$-th power. Let $\{\al_1,\al_2\}$ be a
system of simple roots of $\bG$. Then $\Phi^+=\{\al,\,\bt,\,\al +\bt\}$,
and by~\cite[Thm.~2.4.5(b)(1)]{GLS}, we have
$[x_\al(u),x_\bt(v)]=x_{\al+\bt}(\eps uv)$ for all $u,\,v\in\FF_{q^2}$,
where $\eps$ is a sign independent of $u$ and $v$.
Define $Y_\bt=x_\bt(\FF_q)$. In the following, we will need to understand
the characters degrees of the group $Y=X_\al Y_\bt X_{\al+\bt}$ of order $q^5$.

\begin{lem}   \label{lem:Aodd}
 The group $Y$ has $q^3$ linear characters and $q^3-q$ irreducible characters
 of degree $q$.
\end{lem}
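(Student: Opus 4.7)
The plan is to identify $[Y,Y]$ explicitly from the commutator relations, count linear characters from the abelianization, then bound every irreducible character degree by applying Ito's theorem to a suitable abelian normal subgroup of index $q$, and finally close the count via the sum-of-squares identity $\sum_\chi\chi(1)^2=|Y|$.

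The first step uses only the Chevalley commutator formula quoted just before the statement. Each of the three factors $X_\al\cong\FF_{q^2}^+$, $Y_\bt\cong\FF_q^+$, $X_{\al+\bt}\cong\FF_{q^2}^+$ is abelian, and since $2\al+\bt$ and $\al+2\bt$ are not roots of $A_2$, the subgroup $X_{\al+\bt}$ is centralised by both $X_\al$ and $X_\bt$, hence lies in $Z(Y)$. Setting $v=1$ in $[x_\al(u),x_\bt(v)]=x_{\al+\bt}(\eps uv)$ and letting $u$ run through $\FF_{q^2}$ gives $[X_\al,Y_\bt]=X_{\al+\bt}$. Combining these, every element of $Y$ has a unique expression $x_\al(u)x_\bt(v)x_{\al+\bt}(w)$ with $u,w\in\FF_{q^2}$, $v\in\FF_q$, so $Y$ is indeed a subgroup of order $q^5$ of nilpotency class $2$ with $[Y,Y]=X_{\al+\bt}$ of order $q^2$. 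In particular $|Y/[Y,Y]|=q^3$, accounting for the $q^3$ linear characters.

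For the second step I would take $A:=X_\al X_{\al+\bt}$. Because $[X_\al,X_{\al+\bt}]=1$ and both factors are abelian, $A$ is abelian of order $q^4$; it is normal in $Y$ since $[Y_\bt,A]\le[Y_\bt,X_\al]\le X_{\al+\bt}\le A$. As $[Y:A]=q$, Ito's theorem forces $\chi(1)\mid q$ for every $\chi\in\Irr(Y)$, so only the degrees $1$ and $q$ can occur.

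Finally, if $k$ denotes the number of irreducible characters of $Y$ of degree $q$, the identity $\sum_\chi\chi(1)^2=|Y|$ becomes $q^3+kq^2=q^5$, whence $k=q^3-q$, as required. No real obstacle is in sight: once the abelian normal subgroup $A$ of index $q$ is on the table, Ito does the work and the count is a single line. The only point requiring a moment's care is verifying normality of $A$ in $Y$, which reduces to the one commutator computation $[Y_\bt,X_\al]\subseteq X_{\al+\bt}$ already supplied.
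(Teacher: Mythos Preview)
Your argument has a genuine gap at the passage ``It\^o's theorem forces $\chi(1)\mid q$ for every $\chi\in\Irr(Y)$, so only the degrees $1$ and $q$ can occur.'' The divisibility $\chi(1)\mid q$ only says $\chi(1)\in\{1,p,p^2,\dots,q\}$; when $q=p^f$ with $f>1$ this does not rule out intermediate degrees $p,\dots,p^{f-1}$. Your final sum-of-squares line $q^3+kq^2=q^5$ already presupposes that every non-linear degree equals $q$, so the count is circular at that point.

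The paper closes exactly this gap by first counting the conjugacy classes of $Y$, obtaining $|\Irr(Y)|=2q^3-q$ and hence $q^3-q$ non-linear irreducibles. Combining this with the bound $\chi(1)\le q$ (from the same abelian subgroup $X_\al X_{\al+\bt}$ you use) and the sum-of-squares relation $\sum_{\chi(1)\ne1}\chi(1)^2=q^2(q^3-q)$ then forces every non-linear degree to be exactly $q$. Alternatively, you could repair your argument without the class count: for a non-linear $\chi$, the central restriction $\chi|_{X_{\al+\bt}}$ is $\chi(1)\lambda$ for some nontrivial $\lambda$; any linear constituent $\mu$ of $\chi|_A$ then restricts nontrivially to $X_{\al+\bt}$, and one checks from $[x_\al(u),x_\bt(v)]=x_{\al+\bt}(\eps uv)$ that no nontrivial $x_\bt(v)$ stabilises such a $\mu$, so $I_Y(\mu)=A$ and $\chi=\mathrm{Ind}_A^Y\mu$ has degree $[Y:A]=q$.
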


\begin{proof}
Using the commutator relations, every elements of $Y$ can be uniquely written
as $x_\al(u)x_\bt(v)x_{\al+\bt}(w)$ for $u,\,w\in\FF_{q^2}$ and $v\in \FF_q$.
Thus $|Y|=q^5$. Now, we describe the conjugacy classes of $Y$. Using the
commutator relations, we can show that if $v\neq 0$, then for every
$u\in\FF_{q^2}$, the class of $x_\al(u)x_\bt(v)$ is
$\{x_\al(u)x_\bt(v)x_{\al+\bt}(w)\mid w\in \FF_{q^2}\}$, which has $q^2$
elements. There are $q^2(q-1)$ such classes. Let $u\in\FF_{q^2}^{\times}$.
Then the set $\{x_\al(u)x_{\al+\bt}(w)\,|\,w\in\FF_{q^2}\}$ is the union of
$q$ classes of $Y$ of size $q$. Finally, there are $q^2$ central classes with
representative in $X_{\al+\bt}$. Hence, $Y$ has $2q^3-q$ classes and
$|\Irr(Y)|=2q^3-q$. Since $[Y,Y]=X_{\al+\bt}$, we deduce that $Y$ has $q^3$
linear characters, and so
\begin{equation}
  \sum_{\chi\in\Irr(Y),\,\chi(1)\neq 1}\chi(1)^2=q^2(q^3-q).  \label{eq:rel1}
\end{equation}
Moreover, the subgroup $X_\al X_{\al+\bt}$ is abelian of index~$q$.
Thus, for any $\chi\in\Irr(Y)$, one has $\chi(1)\leq q$, and we derive the
result from Relation~(\ref{eq:rel1}).
\end{proof}

\begin{lem}   \label{lem:rktwo}
 Let $G=\bG^F$ be a quasi-simple group of Lie type, with $q$ as above. Assume
 that $p\notin\{2,3\}$. Let $\al',\,\bt'\in\Phi^+$ be such that $\al=\pi(\al')$
 and $\bt=\pi(\bt')$ are linearly independent, and consider
 $Q:=\langle X_{\widehat{\al'}},X_{\widehat{\bt'}}\rangle$.
 If $\chi$ is a character of $Q$ with $\chi(1)<q$, then $[Q,Q]\leq \ker(\chi)$.
\end{lem}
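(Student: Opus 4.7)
The plan is to reduce to a rank-$2$ analysis via the subsystem $\widehat{\Psi}\subseteq\widehat{\Phi}$ spanned by $\widehat{\al'}$ and $\widehat{\bt'}$. Since $p\notin\{2,3\}$, $F$ is not a Suzuki or Ree endomorphism, so $\widehat{\Psi}$ is a genuine (possibly non-reduced) rank-$2$ root system, i.e.\ of type $A_1\times A_1$, $A_2$, $B_2$, $BC_2$, or $G_2$. By the Chevalley commutator formulas one has $Q=\prod_{\wal\in\widehat{\Psi}^+}X_\wal$, with $[Q,Q]$ contained in the subproduct over those $\wal$ expressible as $i\widehat{\al'}+j\widehat{\bt'}$ with both $i,j\geq 1$. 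I would then proceed by case analysis on the type of $\widehat{\Psi}$.

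I would first dispose of $\widehat{\Psi}=A_1\times A_1$: for any $\mu\in\Phi_{\widehat{\al'}}$, $\nu\in\Phi_{\widehat{\bt'}}$ and $i,j\ge 1$, the projection $\pi(i\mu+j\nu)=i\pi(\mu)+j\pi(\nu)$ is a positive combination of $\widehat{\al'}$ and $\widehat{\bt'}$ not lying in $\widehat{\Phi}$, so $i\mu+j\nu$ is not a root of $\Phi$ and the Chevalley formulas force $[X_{\widehat{\al'}},X_{\widehat{\bt'}}]=1$. Thus $Q=X_{\widehat{\al'}}\times X_{\widehat{\bt'}}$ splits as a direct product, every irreducible character of $Q$ decomposes as $\chi_1\otimes\chi_2$, and $\chi$ is non-linear only if some $\chi_i$ is. By \cite[Table~2.4]{GLS} each $X_\wal$ is either abelian or a Heisenberg group over $\FF_q$ of order $q^3$ (the latter occurring when $\Phi_\wal$ has type $A_2$), and the Heisenberg group's minimum non-linear character degree is exactly $q$. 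Hence a non-linear $\chi$ would satisfy $\chi(1)\geq q$, contradicting the hypothesis; so $\chi$ is linear.

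For the remaining types ($A_2$, $B_2$, $BC_2$, $G_2$) I would apply Clifford theory with respect to a well-chosen abelian normal subgroup $A\lhd Q$ containing $[Q,Q]$: in type $A_2$ take $A=X_{\widehat{\bt'}}X_{\widehat{\al'+\bt'}}$; in $B_2$ and $BC_2$ take $A=\prod_{\wal\neq\widehat{\al'}}X_\wal$ with $\widehat{\al'}$ a short simple class; and in $G_2$, where $[Q,Q]$ is non-abelian, take $A$ to be a maximal abelian normal subgroup of $Q$ obtained by dropping one intermediate root subgroup. For a non-linear $\chi\in\Irr(Q)$, some constituent $\la$ of $\chi|_A$ is non-trivial on $[Q,Q]$, and in particular non-trivial on some ``highest'' central root subgroup $X_\wal\leq[Q,Q]$; the Chevalley commutator then induces a bilinear pairing $X_{\widehat{\al'}}\times A\to X_\wal$, which is non-degenerate because $p>3$ keeps the relevant structure constants units modulo $p$. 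Hence $T_Q(\la)\cap X_{\widehat{\al'}}=1$, $[Q:T_Q(\la)]\geq|X_{\widehat{\al'}}|\geq q$, and Clifford forces $\chi(1)\geq q$, a contradiction. The $A_2$-subcase in which $X_{\widehat{\al'}}$ or $X_{\widehat{\bt'}}$ is itself a non-abelian Heisenberg group of order $q^3$ (arising from an embedded ${}^2A_{2n}$-configuration) is exactly the content of Lemma~\ref{lem:Aodd}, which is invoked directly.

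The main obstacle will be the explicit case-by-case verification for $B_2$, $BC_2$, and especially $G_2$, where one must track the higher nilpotency class of $Q$ and iterate the Clifford/pairing argument as needed to ensure non-degeneracy of the relevant pairing at each stage. The hypothesis $p>3$ is essential throughout, both to avoid bad-prime pathologies in the Chevalley structure constants of $B_2$, $G_2$, $BC_2$ and to exclude the Suzuki and Ree endomorphisms.
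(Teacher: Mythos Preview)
Your overall case structure---splitting according to the type ($A_1\times A_1$, $A_2$, $B_2$, $BC_2$, $G_2$) of the rank-two subsystem generated by $\widehat{\al'},\widehat{\bt'}$---matches the paper's, and your treatment of $A_1\times A_1$ and untwisted $A_2$ is essentially the same (the paper invokes \cite[Thm.~7.5]{Hup}, which encodes exactly the Clifford/pairing argument you describe). Beyond $A_2$, however, the paper does not attempt a single Clifford step against a maximal abelian normal subgroup. Instead it proceeds by \emph{restriction and quotient}: in untwisted $B_2$ one locates the subgroup $X_\al X_{\al+\bt}X_{2\al+\bt}$, isomorphic (since $p\ne2$) to the $A_2$ model group $Q_{A_2}$, restricts $\chi$ to obtain $X_{2\al+\bt}\le\ker\chi$, and then observes that $Q/X_{2\al+\bt}\cong Q_{A_2}$ again; $G_2$ is handled by two such $A_2$-subgroup steps (using $p\ne3$) followed by passage to a $B_2$-quotient. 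The twisted cases receive separate treatment: the ${}^2A_3$-type $B_2$ reduces to the group $Y$ of Lemma~\ref{lem:Aodd} as a \emph{quotient} (not, as you write, as an $A_2$-configuration with Heisenberg root subgroups---that is the ${}^2A_4$, i.e.\ $BC_2$, case, which the paper handles via an explicit surjection onto an untwisted $A_2$-group over $\FF_{q^2}$ combined with Geck's $\SU_3$ Borel tables); and the ${}^3D_4$ case (twisted $G_2$) is read directly from Himstedt's character tables rather than argued structurally.

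Your direct pairing scheme has a genuine gap precisely where you flag the ``main obstacle'': in $G_2$, once your abelian $A$ no longer contains $[Q,Q]$, a non-linear $\chi$ need not restrict non-trivially to $[Q,Q]\cap A$, let alone to the highest central root subgroup, so the non-degenerate pairing step is not triggered until one has first killed off intermediate root subgroups---and doing that is exactly the subgroup/quotient recursion the paper makes explicit. For $BC_2$ and ${}^3D_4$ your outline supplies no concrete mechanism, and the paper's recourse to explicit character tables in those cases reflects that the structure there does not yield cleanly to a uniform pairing argument.
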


\begin{proof}
Let $\chi$ be a character of $Q$ such that $\chi(1)<q$. Denote by $R$ the root
subsystem generated by $\al$ and $\bt$. If $R$ is of type $A_1\times A_1$ then
$Q$ is abelian and the claim is obvious, so we may exclude that case from
now on.

First, we consider the case of untwisted groups. In this case, $F$ acts
trivially on $\Phi$, and $\pi=\Id$. Suppose that $R$ is of type $A_2$.
By~\cite[Thm.~1.12.1]{GLS}, $Q=X_\al X_\bt X_{\al+\bt}$ and
$[x_\al(u),x_\bt(v)]=x_{\al+\bt}(\eps uv)$ for all $u,\,v\in\FF_q$,
where $\eps$ is a sign depending only on $\al$ and $\bt$. Thus,
$Z(Q)=[Q,Q]=X_{\al+\bt}$. Furthermore, for any $g\notin X_{\al+\bt}$ and
$c\in X_{\al+\bt}$, we derive from the commutator relation that there is
$t\in Q$ such that $[g,t]=c$. Hence, by~\cite[Thm.~7.5]{Hup}, the degrees of
the irreducible characters of $Q$ are $1$ or $q$. Since $\chi(1)<q$, it
follows that $\chi$ is a sum of linear characters. In particular, $[Q,Q]$ lies
in the kernel of $\chi$, as required. In the following, we denote the group
$Q$ considered here by $Q_{A_2}$.

Assume that $R$ is of type $B_2$, and $R^+=\{\al,\,\bt,\,\al+\bt,\,2\al+\bt\}$.
By the commutator relations~\cite[Thm.~1.12.1]{GLS},
$Y=X_\al X_{\al+\bt}X_{2\al+\bt}$ is a subgroup of $Q$ isomorphic to $Q_{A_2}$,
because $p\neq 2$; an isomorphism from $Q_{A_2}$ to $Y$ is given by
$x_\al(u)x_\bt(v)x_{\al+\bt}(w)\mapsto x_\al(u)x_{\al+\bt}(v)x_{2\al+\bt}(2w)$.
In particular, the argument for type $A_2$ can be applied to
$\Res_Y^Q(\chi)$. This implies that $X_{2\al+\bt}\leq\ker(\chi)$.
Since $X_{2\al+\bt}\lhd Q$, $\chi$ factorizes through the quotient
$\overline{Q}=Q/X_{2\al+\bt}$. By the commutator relations,
$\overline{Q}=\overline{X}_\al\overline{X}_\bt\overline{X}_{\al+\bt}$,
where $\overline{X}_{\gamma}=\{\overline{x}_{\gamma}(t)\,|\,t\in\FF_{q}\}$
is isomorphic to $X_{\gamma}$ for $\gamma\in\{\al,\bt,\al+\bt\}$, and satisfies
$[\overline{x}_\al(u),\overline{x}_\bt(v)]=\overline{x}_{\al+\bt}(\eps
uv)$ for all $u,\,v\in\FF_q$ and some sign $\eps$ depending only on $\al$
and $\bt$ (see~\cite[Thm.~1.12.1(b)]{GLS}). Hence, $\overline{Q}$ is
isomorphic to $Q_{A_2}$, and the argument above gives
$X_{\al+\bt}\leq \ker(\chi)$. Since $[Q,Q]=X_{\al+\bt}X_{2\al+\bt}$, the
result follows. In the following, we denote by $Q_{B_2}$ the group $Q$
considered here.

Assume that $R$ is of type $G_2$, and
$R^+=\{\al,\,\bt,\,\al+\bt,\,2\al+\bt,\,3\al+\bt,\,3\al+2\bt\}$. Since
$p\neq 3$, we deduce from~\cite[Thm.~1.12.1]{GLS} that the subgroup
$Y=X_\al X_{2\al+\bt}X_{3\al+\bt}$ is isomorphic to $Q_{A_2}$ (with
$Q_{A_2}\rightarrow Y$ defined by
$x_\al(u)x_\bt(v)x_{\al+\bt}(w)\mapsto x_\al(u)x_{2\al+\bt}(v)
x_{3\al+\bt}(3w)$), and as above, we obtain $X_{3\al+\bt}\leq \ker(\chi)$.
Similarly, $X_{3\al+2\bt}\leq\ker(\chi)$. Note that $X_{3\al+\bt}X_{3\al+2\bt}$
is normal in $Q$, and $\overline{Q}=Q/X_{3\al+\bt}X_{3\al+2\bt}$ is isomorphic
to $Q_{B_2}$ by the commutator relations. Since $\chi$ factorizes through
$\overline{Q}$, and because $p\neq 2$, we deduce in a similar way that
$\ker(\chi)$ contains $[\overline{Q},\overline{Q}]=\overline{X}_{\al+\bt}
\overline{X}_{2\al+\bt}$. The result follows.

Now, we consider the case of twisted groups. Suppose that $R$ is of type
$A_2$. By~\cite[Thm.~2.4.5]{GLS}, $Q$ is isomorphic to the untwisted group
$Q_{A_2}$ over $\FF_{q^2}$. In particular, the minimal degree of a non-linear
character of $Q$ is $q^2$ and the result follows as above.

Suppose that $R$ is of type $B_2$. Then there are two possibilities. Either
$Q$ is a Sylow $p$-subgroup of $\tw2A_3(q)$ or of $\tw2A_4(q)$. We first
assume that $\bG$ is of type $A_3$. Let
$\Delta=\{\al_1,\,\al_2,\,\al_3\}$ be a system of simple roots of $\bG$.
Without loss of generality, we can assume that $\al=\pi(\al_1)$ and
$\bt=\pi(\al_2)$. We have $\Phi_{\theta(\al_1)}=\{\al_1,\al_3\}$,
$\Phi_{\theta(\al_2)}=\{\al_2\}$,
$\Phi_{\theta(\al_1+\al_2)}=\{\al_1+\al_2,\,\al_2+\al_3\}$ and
$\Phi_{\theta(\al_1+\al_2+\al_3)}=\{\al_1+\al_2+\al_3\}$,
where $\theta$ is the map defined in~(\ref{eq:thetadef}) above.
In particular, the root subgroups $X_\al$ and $X_{\al+\bt}$ are isomorphic to
$\FF_{q^2}$, and $X_\bt$ and $X_{2\al+\bt}$ to $\FF_q$.
Consider $L=Y_\al Y_{\al+\bt}X_{2\al+\bt}$, where $Y_\al=x_\al(\FF_q)$ and
$Y_{\al+\bt}=x_{\al+\bt}(\FF_q)$. By~\cite[Thm.~2.4.5(b)(2)]{GLS}, $L$ is a
subgroup of $Q$ and we have
$[x_\al(u),x_{\al+\bt}(v)]=x_{2\al+\bt}(2\eps uv)$ for all
$u,\,v\in\FF_{q}$. In particular, $L$ is isomorphic to $Q_{A_2}$ (because
$p\neq 2$) and we conclude as above that $X_{2\al+\bt}\leq \ker(\chi)$.
Note that $X_{2\al+\bt}$ is normal in $Q$, and by~\cite[Thm.~2.4.5(b)(3)]{GLS}
the quotient $\overline{Q}=Q/X_{2\al+\bt}$ is isomorphic to
$\overline{X}_\al\overline{X}_\bt\overline{X}_{\al+\bt}$ with the commutator
relation $[\overline{x}_\al(u),\overline{x}_\bt(v)]=
\overline{x}_{\al+\bt}(\eps u^qv)$. In particular, $\overline{Q}$ is
isomorphic to the group $Y$ of Lemma~\ref{lem:Aodd}. An isomorphism between
$Y$ and $\overline{Q}$ is given by $x_\al(u)x_\bt(v)x_{\al+\bt}(w)\mapsto
\overline{x}_\al(u)\overline{x}_\bt(v)\overline{x}_{\al+\bt}(\eps w^q)$.
So, the degrees of the irreducible characters of $\overline{Q}$ are $1$ or
$q$, and the same argument as above gives that $X_{\al+\bt}\leq\ker(\chi)$,
as required.

Consider now the case that $\bG$ is of type $A_4$, with positive roots
$$\Phi^+=\{\al_1,\al_2,\al_3,\al_4,\al_1+\al_2,\al_2+\al_3, \al_3+\al_4,
  \al_1+\al_2+\al_3,\al_2+\al_3+\al_4,\al_1+\al_2+\al_3+\al_4\},$$
and $Q$ is a Sylow $p$-subgroup of $\tw2A_4(q)$. Choose notation so that
$\bt=\pi(\al_1)$ and
$\al=\pi(\al_2)$. The orbits under the relation $\sim$ are $\{\al, 2\al\}$,
$\{\bt\}$, $\{\al+\bt,2(\al +\bt)\}$ and $\{2\al+\bt\}$. We choose
$\al$, $\bt$, $\al+\bt$ and $2\al+\bt$ as representatives.
Note that $\Phi_\al$ and $\Phi_{\al+\bt}$ are of type $A_2$, and $\Phi_\bt$
and $\Phi_{2\al+\bt}$ are of type $A_1\times A_1$. Thus, $X_\al$ and
$X_{\al+\bt}$ have $q^3$ elements and are isomorphic to a Sylow $p$-subgroup of
$\tw2A_2(q)$. We consider the groups $L=X_\al X_{\al+\bt}X_{2\al+\bt}$, and
$L'=X_{\al'}X_{\bt'}X_{\al'+\bt'}$, where the $X_{\gamma}$'s for
$\gamma\in\{\al',\bt',\al'+\bt'\}$, are the root subgroups associated
to a positive root system of type $A_2$ over $\FF_{q^2}$. Define
$$\varphi:L\rightarrow L',\
  x_\al(t,u)x_{\al+\bt}(t',v)x_{2\al+\bt}(w)\mapsto
  x_{\al'}(t)x_{\bt'}(t')x_{\al'+\bt'}(w),$$
for all $t,\,t',\,u,\,v\in\FF_{q^2}$ such that $t^{q+1}=\eps(u+u^q)$ and
$t'^{q+1}=\eps'(v+v^{q})$, where $\eps$ (resp.~$\eps'$) is a sign
depending only on $\al$ (resp.~$\al+\bt$). Then, by~\cite[Table 2.4 and
Thm.~2.4.5(b)(1), (c)(1)]{GLS}, $\varphi$ is a surjective group
homomorphism with kernel $[X_\al,X_\al][X_{\al+\bt},X_{\al+\bt}]$. Furthermore,
we derive
from~\cite[Table~2.1]{GecSU3} that the irreducible characters of $X_\al$ and
$X_{\al+\bt}$ have degree $1$ or $q$. Indeed, Geck \cite{GecSU3} computed
the character table of a Borel subgroup $B$ of $\tw2A_2(q)$, and by
Clifford theory, the degrees of the irreducible characters of the Sylow
$p$-subgroup contained in $B$ are the $p$-parts of the degrees of the
irreducible characters of $B$. So, by the same argument as above, $\ker(\chi)$
contains $[X_\al,X_\al][X_{\al+\bt},X_{\al+\bt}]$, $\chi$ factorizes through
$L'$, and we obtain that $X_{\al'+\bt'}$ lies in $\ker(\chi)$. This proves that
$X_{2\al+\bt}\leq \ker(\chi)$. Since $X_{2\al+\bt}\lhd Q$, we can work in
the quotient $Q/X_{2\al+\bt}$, and using the relations
\cite[Thm.~2.4.5(b)(1), (c)(2)]{GLS}, the same argument gives that
$X_{\al+\bt}\le\ker(\chi)$. The result follows.

Finally, assume that $R$ is of type $G_2$. Then $Q$ is a Sylow $p$-subgroup
of $\tw3D_4(q)$, and we derive the result from~\cite[Table A.6]{Him04}.
\end{proof}

\begin{rem}   \label{rem:condprime}
Note that the argument of the previous proof also applies for $G$ untwisted
and $R$ of type $B_2$ with $p\neq 2$, and for $G$ any twisted group such that
$R$ is not of type $G_2$ or $B_2$ (the case that $Q$ is a Sylow $p$-subgroup
of type $\tw2A_3(q)$).
\end{rem}

Recall the definition of $m(G,p)$ from Section~\ref{subsec:2.2}.

\begin{prop}   \label{prop:defgr}
 Let $G=\bG^F$ be a quasi-simple group of Lie type not of type $A_1$, and $P$
 a Sylow $p$-subgroup of $G$. Then $\mh(P)=m(G,p)$.
\end{prop}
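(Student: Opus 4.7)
The plan is to prove $\mh(P) \le m(G,p)$ and $\mh(P) \ge m(G,p)$ separately, treating the generic situation where $p^{m(G,p)} = q$ first and then the finitely many special types listed in the definition of $m(G,p)$.

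For the lower bound in the generic case, take $\chi \in \Irr(P)$ with $1 < \chi(1) < q$. Since $P = \prod_{\widehat\gamma \in \widehat\Phi^+} X_{\widehat\gamma}$, the Chevalley commutator relations \cite[Thms.~1.12.1 and~2.4.5]{GLS} show that $[P,P]$ is generated by the $[X_{\widehat\al}, X_{\widehat\bt}]$ as $(\widehat\al,\widehat\bt)$ ranges over pairs of distinct $F$-orbits of simple roots with $\pi(\al), \pi(\bt)$ linearly independent. For each such pair set $Q := \langle X_{\widehat\al}, X_{\widehat\bt}\rangle$: Lemma~\ref{lem:rktwo} (together with Remark~\ref{rem:condprime}, which extends it to the further $p \in \{2,3\}$ cases where the rank-two subsystem is of type other than $B_2, G_2$) forces $[Q,Q] \le \ker\chi$. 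Hence $[P,P] \le \ker\chi$ and $\chi$ is linear, proving $\mh(P) \ge \log_p q$.

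For the matching upper bound, I would fix simple roots $\al, \bt$ with $\al+\bt \in \Phi$---such a pair exists in every irreducible root system of rank $\ge 2$, covering all cases except Suzuki and Ree---and set $N := \prod_{\widehat\gamma \ne \widehat\al, \widehat\bt, \widehat{\al+\bt}} X_{\widehat\gamma}$. A direct check using the commutator relations shows that $N$ is normal in $P$ and $P/N$ is isomorphic to the Heisenberg-type group $Q_{A_2}$ (over $\FF_q$ in the untwisted case and over $\FF_{q^2}$ in the relevant twisted cases, cf.~the proof of Lemma~\ref{lem:rktwo}). Since $Q_{A_2}$ admits an irreducible character of degree $q$ by Lemma~\ref{lem:Aodd} (or its untwisted analog), inflation yields $\chi \in \Irr(P)$ of degree $q$ and hence $\mh(P) \le \log_p q$.

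In the special cases where $m(G,p) < \log_p q$---namely the untwisted $B_n, C_n, F_4, G_2$ in characteristic~2, $G_2$ in characteristic~3, and the Suzuki and Ree families---I would proceed type by type. For the Suzuki and Ree groups, $P$ is a classical Suzuki $p$-group whose irreducible character degrees are explicitly known, and one reads off $\mh(P) = q/\sqrt p$. For the untwisted bad-prime groups, I would construct an explicit character of $P$ of degree $p^{m(G,p)}$ by exploiting the extra commutator relations between short and long root subgroups that appear in characteristic~2 or~3, and then sharpen the argument of Lemma~\ref{lem:rktwo} within the appropriate $B_2$- or $G_2$-subsystem to rule out still smaller non-linear degrees. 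The main obstacle will be this last lower bound: the clean reduction to $Q_{A_2}$ underpinning Lemma~\ref{lem:rktwo} is genuinely unavailable once the rank-two subsystem is $B_2$ with $p=2$ or $G_2$ with $p \in \{2,3\}$, and a finer, case-by-case analysis of the commutator structure of $P$ is required to complete the argument.
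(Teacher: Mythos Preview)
Your two-step plan (lower bound via Lemma~\ref{lem:rktwo} on rank-two subgroups, upper bound via a rank-two quotient) is the paper's strategy, but the upper-bound step is not correct as stated. The claim $P/N\cong Q_{A_2}$ fails for twisted $G$: for instance in $\tw2A_4(q)$ with $\al=\al_1,\bt=\al_2$ one has $|P/N|=q^8$, since $X_{\widehat{\al_2}}$ and $X_{\widehat{\al_1+\al_2}}$ are non-abelian of order $q^3$. The paper's construction is different: it fixes two adjacent simple roots in $\widehat\Phi$, lets $\Psi^+$ be the \emph{entire} set of positive roots in the rank-two subsystem they generate, sets $X=\prod_{\widehat\gamma\notin\Psi^+}X_{\widehat\gamma}\unlhd P$, and identifies $P/X$ with the full Sylow $p$-subgroup of the associated rank-two group of Lie type ($A_2(q)$, $B_2(q)$, $\tw2A_3(q)$, or $\tw2A_4(q)$). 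Each of these is then shown---partly by direct argument, partly via the tables of \cite{GecSU3,AHH,EY86}---to have an irreducible character of the required degree.

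For the special cases and for $p=2$, the paper does not ``sharpen Lemma~\ref{lem:rktwo}'' as you propose; it reads off the minimal non-linear degree of each rank-two Sylow directly from published character tables (\cite{Suz,Eat,HH09,GecSU3,EY86,EnoG2,Him04,Him07,AHH}) and reruns the root-by-root kernel argument with that exact bound in place of~$q$. There is also a gap in your scope: $\tw2A_{2n+1}(q)$, $\tw2D_n(q)$, $\tw2E_6(q)$ with $p=2$ lie in your ``generic'' case since $m(G,p)=\log_p q$, yet adjacent simple roots of $\widehat\Phi$ can generate a $B_2$-subsystem with $Q$ a Sylow of $\tw2A_3(q)$, to which neither Lemma~\ref{lem:rktwo} nor Remark~\ref{rem:condprime} applies---the $\tw2A_3$ tables from \cite{AHH} are needed here as well. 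Finally, your assertion that $[P,P]$ is \emph{generated} by the $[X_{\widehat\al},X_{\widehat\bt}]$ for simple $\widehat\al,\widehat\bt$ is literally false (already in type $A_3$ the highest root subgroup is missed), though the conclusion $[P,P]\le\ker\chi$ survives since $\ker\chi$ is normal and the simple root subgroups generate $P$; the paper avoids this by quoting Howlett's description $[P,P]=\prod_{\gamma\notin\Delta}X_{\widehat\gamma}$ and arguing root by root.
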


\begin{proof}
When $G$ is a Suzuki or a Ree group, the result holds by~\cite[p.~126]{Suz},
\cite[Lemma 5]{Eat} and~\cite[Table~1.5]{HH09}. For $G=\tw2A_2(q)$, we have
already argued in Lemma~\ref{lem:rktwo} that the result holds
by~\cite[Table~2.1]{GecSU3}. For $G=G_2(q)$ with $q=2^f$ or $q=3^f$, we derive
the claim from \cite[Table~I-2]{EY86} and~\cite[Table~I-2]{EnoG2}, and for
$G=\tw3D_4(q)$, the result follows from \cite[Table A.6]{Him04} for $p$ odd,
and from~\cite[Table~A.6]{Him07} for $p=2$. So from now on, we assume that
$G$ is none of the groups considered above and in particular $\Sigma$ has
rank at least $2$.

By~\cite{Hw73}, we have $[P,P]=\prod_{\wal\mid\al\notin\Delta}X_\wal$ whenever
\begin{equation}   \label{eq:exc-derive}
  G\notin\{B_n(2),\,C_n(2),G_2(3), F_4(2), \tw2B_2(2), {}^2G_2(3), \tw2F_4(2)\}.
\end{equation}
First, we suppose that $p>3$. Let $\chi\in\Irr(P)$ be such that $\chi(1)<q$.
Let $\gamma'\in\Phi^+\backslash \Delta$. Write $\gamma=\pi(\gamma')$. By the
proof of~\cite[Cor.~B.2]{MT}, there are linearly independent positive
roots $\al,\,\bt\in\Sigma$ such that $\gamma=\al+\bt$ and hence
$\hgt(\gamma)> \max\{\hgt(\al),\hgt(\bt)\}$, where $\hgt$ denotes the
height function on the root system. In particular, $X_{\gamma}$ lies in the
derived subgroup of $\langle X_\al,X_\bt\rangle$. Now, applying
Lemma~\ref{lem:rktwo}, we conclude that $X_{\gamma}\leq\ker(\chi)$. It follows
that $[P,P]\leq \ker(\chi)$, and $\chi$ is a linear character of $P$. So,
if $\chi$ is a non-linear irreducible character of $P$, then $\chi(1)\geq q$.
By Remark~\ref{rem:condprime} this also covers the case $p=3$.

Assume $p=2$. Then by~\cite[Tables~A.1--A.4]{AHH}, the minimal degree of a
non-linear character of a Sylow $2$-subgroup of $B_2(q)$ is $q/2$.
Suppose now that $G=C_n(q)$ with $q>2$ even and $\chi\in\Irr(P)$ with
$\chi(1)<q/2$.
(Recall that $B_n(q)\simeq C_n(q)$ by~\cite[Thm.~2.2.10]{GLS}.) Note that
the previous argument does not apply directly. Indeed, whenever a long root is
the sum of two short roots $\al$ and $\bt$, then $X_{\al+\bt}$ commutes with
$X_\al$ and $X_\bt$; see~\cite[Thm.~1.12.1 (b)(1)]{GLS}. However,
\cite[Rem.~1.8]{GLS} implies that in a root system of type $C_n$ with
basis $\Delta$, every positive long root $\gamma\notin \Delta$ belongs to a
root system $\Psi$ of type $B_2$ with basis $\al,\,\bt$ such that $\al$ is a
short root and $\bt$ is the long root of $\Delta$. Restricting $\chi$ to the
subgroup $\prod_{\delta\in\Psi^+}X_{\delta}$, we deduce from the case
$B_2(q)$ that $X_{\gamma}$ lies in $\ker(\chi)$. Furthermore, for $\gamma$
a positive short root such that $\gamma\notin\Delta$, the argument for $p>3$
applies. It follows that $[P,P]\leq\ker(\chi)$, and $\chi$ is linear.

Suppose next that $G=\tw2A_{2n+1}(q)$ and $p=2$. Then the root system $\Sigma$
is also of type $C_{n+1}$. We conclude in the same way that if $\chi$ is a
non-linear character of $P$, then $\chi(1)\geq q$, remarking that the minimal
degree of a non-linear character of a Sylow $2$-subgroup of $\tw2A_3(q)$
is $q$ by~\cite[Tables~A.6,\,A.8,\,A.9 and~A.10]{AHH}.

Assume $G=F_4(q)$ with $q>2$ even, and let $\chi$ be an irreducible
character of $P$ such that $\chi(1)<q/2$. Let $\Phi$ be the root system of
$G$ as in~\cite[Rem.~1.8]{GLS}. Denote by
$$\Phi^+=\Big\{\veps_i\,(1\leq i\leq 4),\,\veps_i\pm\veps_j\,(1\leq i<j\leq 4),
  \, \frac{1}{2}(\veps_1\pm\veps_2\pm\veps_3\pm\veps_4) \Big\}$$
the positive roots with respect to the basis
$$\Delta=\{\veps_2-\veps_3,\veps_3-\veps_4,\veps_4,
  \frac{1}{2}(\veps_1-\veps_2-\veps_3-\veps_4)\}.$$
Let $i<j$. Then the positive long root $\veps_i+\veps_j$ lies in the subsystem
$\Psi$ of type $B_2$ with basis $\{\veps_i-\veps_j,\veps_j\}$.
In particular, restricting $\chi$ to $\prod_{\delta\in\Psi^+}X_{\delta}$, we
conclude as in the case of type $C_n$ in characteristic $2$ that
$X_{\veps_i+\veps_j}$ lies in $\ker(\chi)$. Now, suppose $2\leq j\leq 4$, and
let $2\leq k<l\leq 4$ be such that $\{j,k,l\}=\{2,3,4\}$. Then
$\al=\veps_k+\veps_l$ and $\bt=\frac{1}{2}(\veps_1-\veps_j-\veps_k-\veps_l)$
are in $\Phi^+$, and $\veps_1-\veps_j$ lies in the root subsystem of type
$B_2$ with basis $\{\al,\bt\}$. As previously, we conclude that
$X_{\veps_1-\veps_j}$ lies in $\ker(\chi)$. For the remaining non-simple
positive roots, the argument for $p>3$ can be applied (note that the positive
long root $\veps_2-\veps_4=(\veps_2-\veps_3)+(\veps_3-\veps_4)$ is the sum of
two positive long roots). This proves that $\chi$ is linear.

It remains to prove that $P$ has an irreducible character with the announced
degree.
First, recall from the proof of Lemma~\ref{lem:rktwo} that if $G=A_2(q)$, then
$P$ has an irreducible character of degree $q$. Let $G=B_2(q)$. If $p\neq 2$,
then $P/X_{2\al+\bt}$ is isomorphic to a Sylow $p$-subgroup of $A_2(q)$.
Inflating to $P$ a character of degree $q$ of $P/X_{2\al+\bt}$, we conclude
that $P$ has an irreducible character of degree $q$. If $p=2$, then $P$ has
an irreducible character of degree $q/2$ by~\cite{EY86}. Similarly, if
$G=G_2(q)$ and $p>3$, then $P/X_{2\al+\bt}X_{3\al+\bt}X_{3\al+2\bt}$ is
isomorphic to a Sylow $p$-subgroup of $A_2(q)$. Thus, $P$ has an irreducible
character of degree $q$.
Let $G=\tw2A_3(q)$. If $p\neq 2$, then we have shown in the proof of
Lemma~\ref{lem:rktwo} that $P$ has a quotient isomorphic to the group $Y$
of Lemma~\ref{lem:Aodd}. In particular, $P$ has irreducible characters of
degree $q$. If $p=2$, then we conclude
with~\cite[Table A.6,\,A.8,\,A.9 and A.10]{AHH}.
Let $G=\tw2A_4(q)$, and $\al$ be the simple short root of $\Sigma^+$.
Then $X=\prod_{\gamma\in\Sigma^+\setminus\{\al\}}X_{\gamma}\lhd P$, and $P/X$
is isomorphic to $X_\al$ which has an irreducible character
of degree $q$ by~\cite[Table~2.1]{GecSU3} (because $X_\al$ is isomorphic to
a Sylow $p$-subgroup of $\tw2A_2(q)$).

Suppose now that $G$ is any quasi-simple group of Lie type as in
Lemma~\ref{lem:rktwo} or in Remark~\ref{rem:condprime}, or $p=2$ and
$G=C_n(q)$ or $F_4(q)$ with $q>2$. Let $\al$ and $\bt$ be two simple roots
generating a subgroup of type $A_2$ (when $G$ is untwisted of type
$A_n,\,D_n,\,E_6,\,E_7,\,E_8$) or $B_2$ (when $G$ is untwisted of type $B_n$,
$C_n$, $F_4$, or $G=\tw2A_n(q)$, $\tw2D_n(q)$ or $\tw2E_6(q)$).
Let $\Psi^+$ be the positive roots generated by $\al$ and $\bt$. Then
$X=\prod_{\gamma\in\Phi^+\setminus\Psi^+}X_{\gamma}\lhd P$ and $P/X$
is isomorphic to $\langle X_\al,X_\bt\rangle$. By the above discussion,
$P$ has an irreducible character of the required degree.

Finally, we consider the exceptions in (\ref{eq:exc-derive}). The only
cases to treat are $G=C_n(2)$ and $G=F_4(2)$. But then $P$ has a quotient
isomorphic to a Sylow $2$-subgroup of $B_2(2)$, which has an irreducible
character of degree $q=2$. This completes the proof.
\end{proof}

\begin{rem}
 Kazhdan \cite[Prop.~2]{Ka77} has shown that for large enough primes $p$, all
 character degrees of Sylow $p$-subgroups of finite reductive groups in
 characteristic~$p$ are polynomials in $q$. This gives another approach for
 the lower bound on $\mh(P)$ at least when $p$ is large.
\end{rem}

\subsection{The Eaton--Moret\'o conjecture for quasi-simple groups of Lie
type in defining characteristic}

\begin{thm}   \label{thm:Liep}
 Let $S$ be a quasi-simple group of Lie type in characteristic $p$ and $B$
 a $p$-block of $S$ of positive defect. Then the Eaton--Moret\'o conjecture
 holds for $B$.
\end{thm}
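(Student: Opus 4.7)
My plan is to reduce the theorem to the two key propositions \ref{prop:Liep} and \ref{prop:defgr} just established. By the remark at the beginning of Section \ref{sec:Liep}, every quasi-simple group $S$ of Lie type in characteristic $p$ has the form $S = G/Z$ for some $G = \bG^F$ with $\bG$ simply connected simple and $Z \leq Z(G)$, except for the finitely many exceptional covering groups and the Tits simple group $\tw2F_4(2)'$, which I would dispatch at the end by a direct computation from the GAP character table library.

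First I would dispose of type $A_1$: a Sylow $p$-subgroup of $\SL_2(q)$ is elementary abelian of order $q$, so by Humphreys' theorem every $p$-block of $S$ of positive defect has abelian defect group. Thus $\mh(D) = \infty$, and $\mh(B) = \infty$ by the `if' direction of Brauer's height zero conjecture \cite{KM13}.

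Now assume $S$ is not of type $A_1$ and $S = G/Z$ as above. The order of $Z(G)$ is a product of divisors of $q \pm 1$ and hence coprime to $p$, so the projection $G \to S$ maps any Sylow $p$-subgroup $P$ of $G$ isomorphically onto a Sylow $p$-subgroup of $S$. By Humphreys' theorem the defect group $D$ of any positive-defect $p$-block of $S$ is such a Sylow $p$-subgroup, so Proposition \ref{prop:defgr} gives
\begin{equation*}
\mh(D) = \mh(P) = m(G,p).
\end{equation*}
The positive-defect blocks $B_S$ of $S$ correspond bijectively to the blocks $B_\psi$ of $G$ whose central character $\psi$ is trivial on $Z$, and the irreducible characters of $B_S$ are exactly those characters of $B_\psi$ that factor through $G/Z$, with unchanged degrees. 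Since $|S|_p = |G|_p$, heights are preserved under inflation, so Proposition \ref{prop:Liep} gives
\begin{equation*}
\mh(B_S) \geq \mh(B_\psi) = m(G,p).
\end{equation*}
For the reverse inequality I would revisit the explicit witness character of height $m(G,p)$ constructed in the proof of Proposition \ref{prop:Liep}. In the principal block the witness is a unipotent character, which kills $Z(G)$ and a fortiori $Z$. In a non-principal block $B_\psi$ the witness lies in a Lusztig series $\cE(G,s)$ chosen so that the whole series has central character $\psi$, which by hypothesis is trivial on $Z$. In either case the witness descends through $G \to S$ to a character of $B_S$ of height $m(G,p)$, giving $\mh(B_S) \leq m(G,p)$ and completing the argument.

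The main obstacle is the list of exceptional covering groups together with $\tw2F_4(2)'$, for which the structural reduction above does not apply. Fortunately the list is finite and short, and for each group both $\mh(B)$ (for every $p$-block $B$ of positive defect) and $\mh(D)$ (of the corresponding defect group) can be read off directly from the character table libraries in GAP; the conjecture is then verified by inspection.
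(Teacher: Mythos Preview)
Your proposal is correct and follows the same overall strategy as the paper: reduce the generic case to Propositions~\ref{prop:Liep} and~\ref{prop:defgr} via the simply connected cover $G$, then treat $\tw2F_4(2)'$ and the exceptional covering groups separately by a finite check. Two small differences are worth noting. First, your detour through the explicit witness character to get $\mh(B_S)\le m(G,p)$ is unnecessary: since every $\chi\in\Irr(B_\psi)$ has central character $\psi$, and $\psi$ is trivial on $Z$, the whole of $\Irr(B_\psi)$ descends to $S=G/Z$, so $\mh(B_S)=\mh(B_\psi)$ outright (this is what the paper's terse ``we may assume $S=G$'' encodes). Second, for the exceptional coverings the paper does not simply invoke GAP but first observes that the exceptional part of the Schur multiplier is always a $p$-group; hence any block $B$ of such a covering $S$ dominates a block $B'$ of $S/O_p(S)$, which is a non-exceptional covering already handled. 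Inflation then gives $\mh(B)\le\mh(B')$ and $\mh(D)\le\mh(D')$, and since $\mh(B')=\mh(D')=1$ except for the $A_1$-coincidences $\PSL_2(4)\cong\fA_5$, $\PSL_2(9)\cong\fA_6$ (covered by Theorem~\ref{thm:alt}) and for $\PSL_3(4)$ (where $m(G,2)=2$), only the covers of $\PSL_3(4)$ require an explicit inspection. Your blanket GAP check is of course valid, but the paper's reduction is more structural and leaves almost nothing to computation.
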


\begin{proof}
The character table of $\tw2F_4(2)'$ is known and one finds that it has a
single $2$-block of positive defect, which contains a character of degree~26,
and its Sylow 2-subgroup has an irreducible character of degree~2. For all
other cases, let us first assume that $|Z(S)|$ is prime to $p$. In particular,
$S$ is not an exceptional covering group of its non-abelian composition factor.
Thus, $S$ is a quotient of some finite group $G$ of simply connected Lie type.
As any irreducible character of $S$ can be considered as a character of $G$,
we may assume that $S=G$ is of simply connected type. Thus we are in the
situation of Proposition~\ref{prop:Liep}. In particular $B$ is of full defect,
so a Sylow $p$-subgroup of $G$ is a defect group for $B$. Now compare
Proposition~\ref{prop:Liep} with Proposition~\ref{prop:defgr}. \par
Finally, assume that $G$ is an exceptional covering group of one of
$$\begin{aligned}
\PSL_2(4),\PSL_2(9),\PSL_3(2),\PSL_3(4),\PSL_4(2),\PSU_4(2),\PSU_4(3),
  \PSU_6(2),\\
  \Sp_6(2),\OO_7(3),\OO_8^+(2),\tw2B_2(8),G_2(3),G_2(4),F_4(2),\tw2E_6(2)
\end{aligned}$$
(see \cite[Tab.~24.3]{MT}). Note that the exceptional part of the Schur
multiplier always has order a power of the defining prime $p$. Thus, the block
$B$ lifts a block $B'$ of $G/O_p(G)$ of the same defect, and for that
we already showed the conjecture before. In all cases but
$$\PSL_2(4),\PSL_2(9),\PSL_3(4),$$
the minimal non-zero height in $B'$ was equal to~1, so cannot become smaller
in $B$. The alternating groups $\PSL_2(4)\cong\fA_5$ and $\PSL_2(9)\cong\fA_6$
were already treated in Theorem~\ref{thm:alt}.
The exceptional covering groups $2.\PSL_3(4)$ and $6.\PSL_3(4)$ have faithful
irreducible characters of degrees~10, respectively~6, and direct calculation
shows that their Sylow 2-subgroups have twelve irreducible characters of
degree~2. This completes the proof.
\end{proof}

\section{Groups of Lie-type: cross characteristic} \label{sec:Liecross}

We now discuss some cases in groups of Lie type in cross characteristic.
So here $G$ is defined in characteristic~$r$, and we consider $p$-blocks of $G$
for primes $p\ne r$.

\subsection{Unipotent blocks in groups of exceptional type}

We first deal with exceptional groups of Lie type.

\begin{prop}   \label{prop:smallexc}
 Let $p$ be a prime and $G$ be quasi-simple with $G/Z(G)$ one of
 $$\tw2B_2(q^2),{}^2G_2(q^2),G_2(q),\tw3D_4(q),\tw2F_4(q^2).$$
 Then the Eaton--Moret\'o conjecture holds for all $p$-blocks of $G$.
\end{prop}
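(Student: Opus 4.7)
The plan is to reduce immediately to non-defining characteristic and then argue block by block. Since the case $p = r$ (the defining characteristic of $G$) is already covered by Theorem~\ref{thm:Liep}, we may assume $p \neq r$ throughout. Blocks of defect zero are trivial, giving $\mh(B) = \mh(D) = \infty$. For any $p$-block $B$ whose defect group $D$ is abelian, the abelian direction of Brauer's height zero conjecture proved by Kessar--Malle \cite{KM13} yields $\mh(B) = \infty = \mh(D)$, matching the conjecture. Hence the substantive content concerns $p$-blocks with non-abelian defect groups, for which one must establish $\mh(B) = \mh(D) = 1$.

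For each of the five families the $p$-block distribution, the structure of defect groups, and the full generic character tables are available in the literature: Suzuki \cite{Suz} for $\tw2B_2(q^2)$, Ward \cite{Wa} for ${}^2G_2(q^2)$, Malle \cite{Ma90} for $\tw2F_4(q^2)$, and classical work of Chang--Ree, Enomoto, Hiss--Shamash, and Deriziotis--Michler for $G_2(q)$ and $\tw3D_4(q)$. Since the twisted rank of these groups is at most $2$, Brou\'e--Malle--Michel theory of $d$-cuspidal pairs restricts non-abelian defect groups to a short, explicit list of primes $p \neq r$: essentially those dividing a cyclotomic value $\Phi_d(q)$ with $d$ having multiplicity greater than one in the generic Sylow $\Phi_d$-theory, together with small primes such as $p = 2,3$ in low rank. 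I would enumerate the resulting pairs $(B,D)$ in a table organized by family and by prime.

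For each such pair the two equalities $\mh(B) = 1$ and $\mh(D) = 1$ are verified separately. To see $\mh(B) = 1$ I would either point to a unipotent character in $B$ whose degree polynomial manifestly has $p$-part $p \cdot |G:D|_p$, or, using Jordan decomposition together with Proposition~\ref{prop:degs}, pick a character in the Lusztig series of a well-chosen small semisimple element of $G^*$. To see $\mh(D) = 1$ it suffices to produce an irreducible character of $D$ of degree $p$; in these low-rank cases $D$ is either an extension of a cyclic torus by a subgroup of the relative Weyl group (so induction of a generic linear character of the torus supplies such a degree) or, for $p = 2$ in $\tw2B_2(q^2)$, a Suzuki $2$-group whose non-linear character degrees are classically known to equal $q^2$. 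The main obstacle is the case-by-case bookkeeping across all five families and their relevant small primes, but since the rank is bounded the list is finite and each instance should be settled by direct inspection of the cited character tables, with no new structural ideas required beyond Propositions~\ref{prop:degs} and~\ref{prop:defgr} and the abelian height zero theorem.
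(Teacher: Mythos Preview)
Your overall strategy matches the paper's proof: reduce to non-abelian defect via \cite{KM13} and Theorem~\ref{thm:Liep}, then treat the remaining short list of primes case by case using the known character tables and block descriptions. The paper carries this out with the specific references you name (Hiss--Shamash for $G_2(q)$, Deriziotis--Michler for $\tw3D_4(q)$, \cite{Ma90} for $\tw2F_4(q^2)$), and for $\tw2B_2(q^2)$ and ${}^2G_2(q^2)$ simply observes that all cross-characteristic Sylow subgroups are abelian, so nothing remains.

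That last point exposes a confusion in your sketch: your remark about ``$p=2$ in $\tw2B_2(q^2)$, a Suzuki $2$-group'' is misplaced, since $p=2$ is the defining characteristic there and you have already excluded it; in cross characteristic the Suzuki groups have only cyclic Sylow subgroups and contribute nothing to the non-abelian case list. Two further misattributions: Proposition~\ref{prop:degs} explicitly excludes the Suzuki and Ree types, so you cannot invoke it for $\tw2B_2$, ${}^2G_2$, $\tw2F_4$; and Proposition~\ref{prop:defgr} concerns Sylow $p$-subgroups in \emph{defining} characteristic, so it is irrelevant to the defect groups you need here. The defect groups that actually occur (e.g.\ for $G_2(q)$ at $p=3$) are extensions of a homocyclic abelian group by a group of order~$p$, not of a cyclic torus, and $\mh(D)=1$ follows directly from that description rather than from the propositions you cite. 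None of these slips is fatal to the plan, but they indicate the case list has not yet been worked out accurately.
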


\begin{proof}
Since Brauer's height zero conjecture has been proved for blocks of
quasi-simple groups with abelian defect groups (see \cite{KM13}) we may assume
that $G$ has non-abelian Sylow
$p$-subgroups. The case where $p$ is the defining prime was already handled
in Theorem~\ref{thm:Liep}, thus now assume that we are in cross characteristic.
For groups $\tw2B_2(q^2)$ and ${}^2G_2(q^2)$ all such Sylow subgroups are
abelian. For $G_2(q)$ we only need to consider $p=2,3$. For $p=3$ only the
principal block has non-abelian defect, and it contains characters of
height~1 (see \cite[\S2.2,2.3]{HS90}). Since the Sylow 3-subgroups are
non-abelian extensions of a homocyclic abelian group with a group of order~3,
they also possess characters of height~1. For $p=2$, the blocks with
non-abelian defect groups are described in \cite{HS92}: these are the blocks
denoted $B_1,B_3,B_{1a},B_{1b},B_{2a}$ and $B_{2b}$. All of them contain
characters of height~1. The defect groups are either the Sylow 2-subgroups
of $G_2(q)$, or semidihedral, thus also possess characters of height~1.
\par
For $\tw3D_4(q)$ the relevant primes are again $p=2,3$. By
\cite[Prop.~5.4]{DM87} only the principal 3-block has non-abelian defect
groups, and the characters $\chi_{5,1}$ (if $q\equiv1\pmod3$) respectively
$\chi_{10,1}$ have height~1. The Sylow 3-subgroups possess an abelian normal
subgroup of index~3, so have characters of height~1 as well. For the prime
$p=2$ there exist three types of non-abelian defect groups, by
\cite[Prop.~5.3]{DM87}, and all these have characters of height~1. The
principal block will be discussed in Proposition~\ref{prop:B0p=2}.
For the other 2-blocks Jordan decomposition gives a height preserving
bijection to a unipotent block of a group ${}^{(2)}\!A_2(q)$, $A_1(q)$ or
$A_1(q^3)$, where the existence of characters of height~1 is easily checked.
For $\tw2F_4(q^2)$ we just need to consider $p=3$. By \cite[Bem.~2]{Ma90}
only the principal 3-block has non-abelian defect groups, and it contains
characters of height~1. The Sylow 3-subgroup of $G$ is contained inside
a subgroup $\SU_3(q^2)$ and thus easily seen to possess characters of height~1
as well.
\end{proof}

To deal with the groups of large rank, we need some information on heights
in Sylow $p$-subgroups.

\begin{prop}   \label{prop:principalblock}
 Let $G$ be a quasi-simple group of Lie type in characteristic $r$, and
 $p\neq r$ be a prime number. Let $P$ be a Sylow $p$-subgroup of $G$. If
 $P$ is non-abelian, then $P$ has an irreducible character of degree $p$.
\end{prop}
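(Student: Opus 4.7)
The plan is to exploit the classical structure of Sylow $p$-subgroups of finite reductive groups in non-defining characteristic, due to Weir (for classical groups) and Carter--Fong (for exceptional groups). Let $d$ be the multiplicative order of $q$ modulo $p$ (with the usual adjustment in the twisted cases) and set $p^a=(q^d-1)_p$. For $p$ odd the Sylow $p$-subgroup $P$ of $G$ lies inside the normaliser of a Sylow $d$-torus $\bT_d$, and up to central modifications coming from $Z(\bG)^F$ the structure theorem identifies $P$ with an iterated wreath product of cyclic groups $C_{p^a}$, the wreath exponents being governed by the $p$-adic expansion of the rank data of the relevant (relative) Weyl group. If $P$ is non-abelian then at least one of the wreath layers must be non-trivial.

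Given this structural input, the next step is to show that $P$ admits a surjection onto the basic non-abelian wreath product $C_{p^a}\wr C_p=C_{p^a}^p\rtimes C_p$: an iterated wreath $C_{p^a}\wr C_p\wr\cdots\wr C_p$ surjects onto its outermost two layers by abelianising each strictly-inner wreath layer (which successively replaces the inner layers by their $C_p$-coinvariants). Then I would exhibit an irreducible character of degree exactly $p$ on $C_{p^a}\wr C_p$: take a non-trivial linear character $\lambda$ of the first coordinate of the base group and extend it trivially to the remaining $p-1$ coordinates to obtain a linear character $\widetilde\lambda$ of $C_{p^a}^p$; the top $C_p$ cyclically permutes the coordinates and therefore moves $\widetilde\lambda$ to $p$ distinct conjugates. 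Clifford theory then gives that $\mathrm{Ind}_{C_{p^a}^p}^{C_{p^a}\wr C_p}\widetilde\lambda$ is irreducible of degree $p$. Inflating through the surjection $P\twoheadrightarrow C_{p^a}\wr C_p$ yields the required degree-$p$ character of $P$.

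For $p=2$ an analogous (but more delicate) structure theorem applies: the Sylow $2$-subgroups of classical and exceptional Lie-type groups in cross characteristic are again (twisted) wreath-product-like, but with the inner cyclic blocks possibly replaced by quaternion, dihedral or semidihedral pieces depending on whether $4\mid q-1$ or $4\mid q+1$ and on the twist. In each case the outer layer still produces a $C_2$-permutation action on a non-trivial abelian section, and the same Clifford-induction argument produces a degree-$2$ character, which then inflates to $P$. The remaining cases --- quasi-simple groups of small rank, groups with $p\mid|Z(\bG)^F|$, and the exceptional Schur covers listed in \cite[Tab.~24.3]{MT} --- amount to finitely many groups and can be verified directly from the explicit Sylow character tables available in the literature.

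The main obstacle will be the $p=2$ case together with the twisted classical groups, where the wreath structure is no longer a clean iterated product of cyclic groups and where one must carefully identify the abelian normal section and the $C_2$-permutation quotient on which the Clifford induction is performed. Once this structural description is pinned down --- either directly from Carter--Fong's construction or by reduction to a smaller Levi subgroup containing a suitable standard $\SL_2$-type or $\SU_3$-type subgroup --- the same inflation-of-induced-linear-character argument applies.
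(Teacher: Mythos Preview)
Your approach for $p$ odd and $G$ of classical type is the paper's own argument: Weir's iterated wreath description of $P$, passage to a quotient $C_{p^a}\wr C_p$ (if one wreath layer) or $C_p\wr C_p$ (if more), and Clifford induction of a non-invariant linear character of the base.

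The gap is in the exceptional types and in the case $p=2$. There is no ``Carter--Fong for exceptional groups'' giving an iterated wreath description of $P$; Carter--Fong treated Sylow $2$-subgroups of \emph{classical} groups. For a bad prime $p$ in an exceptional group --- $p=3$ in $F_4(q),E_6(q),\tw2E_6(q),E_7(q),E_8(q)$, or $p=5$ in $E_8(q)$ --- the Sylow $p$-subgroup is not an iterated wreath of cyclic groups (already the orders do not match: for $F_4(q)$ with $3^a\Vert q-1$ one has $|P|=3^{4a+2}$, which is no $|C_{3^a}\wr C_3\wr\cdots|$), and your surjection onto $C_{p^a}\wr C_p$ is not available. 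These are infinite families in $q$, not the ``finitely many groups'' of your last paragraph. The paper handles them by a different mechanism: for the larger primes ($p=5$ in $E_6,\tw2E_6,E_7$; $p=7$ in $E_7,E_8$) it observes that $P$ has an abelian normal subgroup of index $p$; for $p=3$ (and $p=5$ in $E_8$) it exhibits an explicit subsystem subgroup $H\le G$ of classical type containing a Sylow $p$-subgroup of $G$ (Table~\ref{tab:syl}), and then passes to a classical quotient of $H$ to which Weir applies. For $p=2$ the paper (in Proposition~\ref{prop:B0p=2}) uses that $P\le N_G(T)$ for a maximal torus of type $1$ or $w_0$, so that $P/(P\cap T)$ is a Sylow $2$-subgroup of the Weyl group $W$, and then inflates a degree-$2$ character from the $\fS_m$-quotient of $W$; your ``twisted wreath with quaternion/dihedral/semidihedral pieces'' sketch does not identify this quotient and so does not pin down where the Clifford induction is to be performed.
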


\begin{proof}
For $p=2$, this will be proved in Proposition~\ref{prop:B0p=2}. Assume
$p\geq 3$, and let $P$ be a non-abelian Sylow $p$-subgroup of $G$. First
consider the case that $G$ is of classical type. Then, by~\cite{Wei},
there exist integers $a\geq 1$ and $m\geq 1$ (because
$P$ is non-abelian) such that the group
$$\overline{P}\simeq
\ZZ/p^a\ZZ\wr\underbrace{\ZZ/p\ZZ\wr\cdots\wr\ZZ/p\ZZ}_{m\
\textrm{times}}$$
is a quotient of $P$. If $m=1$, then $\overline{P}$ has an irreducible
character of degree $p$ by Clifford theory. If $m\geq 2$, then
$\ZZ/p\ZZ\wr\ZZ/p\ZZ$ is a quotient of $\overline P$ that possesses an
irreducible character of degree $p$ (again by Clifford theory). This proves
that $P$ has an irreducible character of degree $p$, as required.

Now, suppose that $G$ is of exceptional $E$- or $F$-type. Here we only need
to consider primes $p\le7$ (respectively $p\le 5$ when $G=\,{}^{(2)}\!E_6(q)$,
respectively $p=3$ when $G=F_4(q)$) because any other Sylow $p$-subgroups of
$G$ are abelian by~\cite[Thm. 25.16]{MT}. When $p=5$ for $E_6(q)$, $\tw2E_6(q)$
or $E_7(q)$, or when $p=7$ for $E_7(q)$ or $E_8(q)$, then a Sylow $p$-subgroup
$P$ of $G$ contains an abelian normal subgroup of index~$p$, hence $\mh(P)=1$.
In the remaining case that $p=3$, or that $G=E_8(q)$ with $p=5$, we give in
Table~\ref{tab:syl} a subsystem subgroup $H$ of $G$ that contains a Sylow
$p$-subgroup of $G$ in which we may detect an irreducible character of
height~1. Indeed,
consider for example $G=F_4(q)$. Here, for $q\equiv1\pmod3$ a Sylow
3-subgroup of $G$ is contained in the centralizer of a 3-element, of type
$A_2^2$. Taking the quotient of this subgroup by a normal subgroup $\SL_3(q)$
we obtain a group $\PGL_3(q)$, whose Sylow 3-subgroup is non-abelian, hence
has an irreducible character of height~1. So the same holds for $P$. When
$q\equiv2\pmod3$, the analog argument goes through with the subgroup
$\tw2A_2\tw2A_2$ with a quotient of type $\PGU_3(q)$. A similar argument
applies for the other types.

\begin{table}[htbp]
\caption{Overgroups of Sylow subgroups}   \label{tab:syl}
\[\begin{array}{|r|r|ll|ll|}
\hline
 G& p& H& & H& \\
\hline
    F_4(q)& 3& A_2(q)^2& (q\equiv1\pmod3)& \tw2A_2(q)^2& (q\equiv2\pmod3)\\
    E_6(q)& 3& A_2(q)^3& (q\equiv1\pmod3)& A_2(q^2)\,\tw2A_2(q)& (q\equiv2\pmod3)\\
\tw2E_6(q)& 3& A_2(q^2)A_2(q)& (q\equiv1\pmod3)& \tw2A_2(q)^3& (q\equiv2\pmod3)\\
    E_7(q)& 3& E_6(q)A_1(q)& (q\equiv1\pmod3)& \tw2E_6(q)A_1(q)& (q\equiv2\pmod3)\\
    E_8(q)& 3& E_6(q)A_2(q)& (q\equiv1\pmod3)& \tw2E_6(q)\,\tw2A_2(q)& (q\equiv2\pmod3)\\
          & 5& A_4(q)^2& (q\equiv1\pmod5)&  \tw2A_4(q)^2& (q\equiv4\pmod5)\\
          & 5&       \tw2A_4(q^2)& (q\equiv2,3\pmod5)& & \\
\hline
\end{array}\]
\end{table}
\end{proof}

\begin{prop}   \label{prop:B0exc}
 The Eaton--Moret\'o-conjecture holds for the principal blocks of exceptional
 type quasi-simple groups.
\end{prop}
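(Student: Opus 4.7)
The plan is to combine Brauer's height zero conjecture with Propositions~\ref{prop:smallexc} and~\ref{prop:principalblock} to reduce the statement to exhibiting, for each exceptional type in cross characteristic with non-abelian Sylow $p$-subgroup, a single irreducible character of height~$1$ in the principal block. Indeed, if a Sylow $p$-subgroup $P$ of $G$ is abelian, then by~\cite{KM13} every character of $B_0$ has height zero, so $\mh(B_0)=\infty=\mh(P)$; when $P$ is non-abelian, Proposition~\ref{prop:principalblock} yields $\mh(P)=1$, and it remains only to produce a character of $B_0$ of $p$-height exactly~$1$.

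The defining-characteristic case is covered by Theorem~\ref{thm:Liep}, and the small-rank types $\tw2B_2(q^2),{}^2G_2(q^2),G_2(q),\tw3D_4(q),\tw2F_4(q^2)$ are handled by Proposition~\ref{prop:smallexc}, leaving the groups $F_4(q),E_6(q),\tw2E_6(q),E_7(q),E_8(q)$ at a bad prime $p\ne r$. By~\cite[Thm.~25.16]{MT} only $p\in\{2,3,5,7\}$ yield non-abelian Sylow subgroups, and further only for the pairs $(G,p)$ tabulated in Proposition~\ref{prop:principalblock}. The case $p=2$ will be deferred to Proposition~\ref{prop:B0p=2}. For each remaining pair $(G,p)$ the plan is to locate a unipotent character of height~$1$: by the $d$-Harish-Chandra theory of Brou\'e--Malle--Michel, the principal $p$-block contains every unipotent character in the principal $d$-series, where $d=d_p(q)$ is the order of $q$ modulo~$p$, so it suffices to point to a unipotent character in Lusztig's degree tables (see~\cite[\S13]{Ca}) whose degree, a product of a power of $q$ with cyclotomic polynomials in $q$, has $p$-part equal to $p$.

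The main obstacle will be organizing the case analysis uniformly across the congruence classes of $q\bmod p$, which govern both the relevant $d$ and the cyclotomic factors of the degree; the argument naturally splits into a finite table echoing that of Proposition~\ref{prop:principalblock}, and each individual verification amounts to a direct computation of $v_p(\chi(1))$. In the cases where $P$ possesses an abelian normal subgroup of index~$p$, one could alternatively produce the required character more structurally by passing to the subsystem subgroup of Table~\ref{tab:syl} and applying Lusztig induction back to $G$, provided one checks compatibility with the block decomposition.
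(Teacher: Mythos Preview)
Your overall strategy coincides with the paper's: reduce via Theorem~\ref{thm:Liep}, \cite{KM13}, Proposition~\ref{prop:smallexc}, and Proposition~\ref{prop:B0p=2} to the large-rank exceptional types at odd bad primes, invoke Proposition~\ref{prop:principalblock} for $\mh(P)=1$, and then exhibit a unipotent character of height~$1$ in $B_0$ case by case. The paper proceeds exactly this way, citing \cite[Thm.~A(c)]{En00} for membership in the principal block and then naming, for each pair $(G,p,d)$, an explicit unipotent character (e.g.\ $\phi_{12,4}$ or $F_4^{II}[1]$ for $F_4(q)$ at $p=3$, $\phi_{35,2}$ for $E_8(q)$ at $p=5,7$, etc.).

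Where your proposal falls short is that you announce the case analysis but do not carry it out: you write that ``each individual verification amounts to a direct computation of $v_p(\chi(1))$'' without actually producing any $\chi$. This is not a mere formality, since for some congruences (notably $q\equiv2\pmod3$ in $F_4$) the character of height~$1$ one needs is cuspidal (such as $F_4^{II}[1]$) and is \emph{not} in the principal $d$-Harish-Chandra series you invoke; its presence in $B_0$ requires Enguehard's description rather than just the Brou\'e--Malle--Michel parametrisation of the principal series. So your justification for block membership is too narrow as stated. To complete the argument you should either list the characters explicitly (as the paper does) and cite \cite{En00} for the block assignment, or else verify in each case that the relative Weyl group of the principal $\Phi_d$-series already contains a character of $p$-height~$1$, which is a separate (and not entirely trivial) check. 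Your final paragraph on Lusztig induction from subsystem subgroups is speculative and would require nontrivial compatibility arguments that neither you nor the paper supply.
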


\begin{proof}
Let $G$ be quasi-simple of exceptional Lie type, and $p$ a prime. We may
assume that $p$ is different from the defining characteristic of $G$ by
Theorem~\ref{thm:Liep}, and also that Sylow $p$-subgroups of $G$ are
non-abelian by \cite{KM13}. Thus $p$ divides the order of the Weyl group
of $G$. We may
further assume by Proposition~\ref{prop:smallexc} that $G$ is of Lie rank at
least~4. The case $p=2$  will be settled in Proposition~\ref{prop:B0p=2} by
a general argument. So assume that $p\ge 3$. We claim that $\mh(B_0)=1=\mh(P)$
whenever $G$ is simple, where $P$ is a Sylow $p$-subgroup of $G$. Then the
claim automatically follows for all covering groups of $G$. The fact that
$\mh(P)=1$ is shown in
Proposition~\ref{prop:principalblock}. We go through the individual cases.
The unipotent characters in the principal block are described in
\cite[Thm.~A(c)]{En00} (see also the tables on p.349--358 in loc.~cit.).
For $G=F_4(q)$ only $p=3$ matters. Here, the principal block contains the
unipotent character $\phi_{12,4}$ if $q\equiv1\pmod3$, respectively
$F_4^{II}[1]$ if $q\equiv2\pmod3$, of 3-defect~1. For $G=E_6(q)$ and $p=3$,
the unipotent character $\phi_{6,1}$ has height~1 and lies in the principal
block when $q\equiv1\pmod3$, while for $q\equiv2\pmod3$, the character
$\phi_{20,10}$ is as required. For $\tw2E_6(q)$, we may take the Ennola duals
of the above characters, viz.~$\phi_{2,4}'$ when $q\equiv2\pmod3$
respectively $\phi_{12,4}$ when $q\equiv1\pmod3$. \par
For $E_7(q)$ we need to look at $p=3$, and at $p=5,7$ when $q\equiv\pm1\pmod p$.
For $p=3$, the unipotent character $\phi_{21,3}$ is of 3-height~1 in the
principal block. For $p=5$, the unipotent character $\phi_{210,6}$ has
5-height~1, for $p=7$ we may instead take the character $\phi_{7,1}$.
Finally, for $G=E_8(q)$ the relevant primes are $p=3,5$, and $p=7$ when
$q\equiv\pm1\pmod7$. For $p=3$, the character $\phi_{525,12}$ has 3-height~1,
while for $p=5,7$ we may choose $\phi_{35,2}$. This completes the proof.
\end{proof}

\begin{thm}   \label{thm:exc}
 The Eaton--Moret\'o-conjecture holds for all unipotent blocks of quasi-simple
 groups of exceptional Lie type.
\end{thm}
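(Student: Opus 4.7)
The plan is to reduce to non-principal unipotent $p$-blocks of the large-rank exceptional groups $F_4(q)$, $E_6(q)$, $\tw2E_6(q)$, $E_7(q)$, $E_8(q)$ in cross characteristic. This is because Theorem~\ref{thm:Liep} disposes of defining characteristic, Proposition~\ref{prop:smallexc} handles the small-rank exceptional groups, and Proposition~\ref{prop:B0exc} handles principal blocks. By \cite{KM13} we may additionally assume the defect group $D$ of $B$ is non-abelian, otherwise the Brauer height zero conjecture gives $\mh(B)=\mh(D)=\infty$. So the task reduces to showing, for each remaining pair $(G,p)$ and non-principal unipotent block $B$ with non-abelian defect, that both $B$ and $D$ contain an irreducible character of positive height equal to the defect height predicted by Eaton--Moret\'o.

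The main tool will be the classification of unipotent blocks in exceptional groups via $e$-Harish-Chandra theory (with $e$ the order of $q$ modulo $p$, suitably modified for $p=2$): for good primes, by Brou\'e--Malle--Michel and Cabanes--Enguehard, every such block $B$ is in bijection with a unipotent $e$-cuspidal pair $(\bL,\zeta)$, its irreducible unipotent characters are labelled by $\Irr(W_G(\bL,\zeta))$ via the generic degree formula, and its defect group $D$ is a Sylow $p$-subgroup of $C_G(Z^\circ(\bL)_p)$; for bad primes, Enguehard~\cite{En00} provides the corresponding explicit description. Since $D$ is assumed non-abelian, we necessarily have $\bL\ne\bG$ and the relative Weyl group $W_G(\bL,\zeta)$ has positive rank. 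To exhibit $\chi\in B$ of height $1$, I would take $\phi\in\Irr(W_G(\bL,\zeta))$ of minimal positive $b$-invariant (typically the reflection character, or in bad-prime exceptions a character read off the lists in \cite[\S13]{Ca}), and check using the generic degree formula $\chi_\phi(1)=\zeta(1)\,|G:L|_{q'}/D_\phi(q)$ that $\chi_\phi(1)_p=p$; this is a finite inspection controlled by the $a$- and $A$-invariants of the associated families, analogous to the argument in the proof of Proposition~\ref{prop:Liep}.

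For $\mh(D)=1$, I would argue as follows. In the good-prime situation, $C_G(Z^\circ(\bL)_p)$ is a Levi subgroup of $G$ that strictly contains $\bL$ and whose Lie-type structure is a central product of classical factors of smaller rank. Its non-abelian Sylow $p$-subgroup then possesses an irreducible character of degree~$p$ by Proposition~\ref{prop:principalblock} applied to one of these quasi-simple factors, and this lifts to $D$ through the quotient or subgroup relation. For bad primes, I would read off the explicit structure of $D$ from Enguehard's tables and again locate an overgroup or quotient which is a Sylow $p$-subgroup of a smaller quasi-simple group of Lie type; Proposition~\ref{prop:principalblock} (together with its forthcoming $p=2$ companion Proposition~\ref{prop:B0p=2}) then delivers the required degree-$p$ character.

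The main obstacle is the bad-prime bookkeeping, especially $p=2$ for $E_7(q)$ and $E_8(q)$ where Cabanes--Enguehard's clean Morita-equivalence theorem fails and one must rely on Enguehard's detailed case-by-case analysis. For those cases I expect that some blocks have defect group isomorphic to the Sylow $p$-subgroup of a proper Levi, reducing the $\mh(D)=1$ verification via Proposition~\ref{prop:principalblock}, while a handful of sporadic unipotent blocks at $p=2,3$ in $F_4,E_6,\tw2E_6$ (the ones tied to exceptional cuspidal unipotent characters, such as $F_4[1]$, $E_6[\theta]$, $\tw2E_6[\theta]$) will require a direct inspection of the unipotent character degrees in the corresponding $e$-Harish-Chandra series, together with the defect group structure recorded in \cite{En00}. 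Once this finite catalogue has been treated, the theorem follows.
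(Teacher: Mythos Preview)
Your reductions and overall strategy match the paper's: reduce via Theorem~\ref{thm:Liep}, Proposition~\ref{prop:smallexc}, Proposition~\ref{prop:B0exc}, and \cite{KM13} to non-principal unipotent $p$-blocks with non-abelian defect in $E_6$, $\tw2E_6$, $E_7$, $E_8$ for bad primes, then invoke Enguehard's classification \cite{En00}. The paper's proof is in fact shorter than your sketch: Enguehard's tables leave only ten blocks to check (Table~\ref{tab:unpblocks}), and for each the description of $D$ in \cite{En00} makes $\mh(D)=1$ immediate, so your Levi-subgroup argument for the defect side, while plausible, is more machinery than needed.

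There is, however, a genuine gap on the block side. Your plan is to locate a \emph{unipotent} character of height~$1$ via the generic degree formula applied to a reflection-type character of $W_G(\bL,\zeta)$, with ``direct inspection of the unipotent character degrees'' as fallback. But in four of the ten blocks---the $D_4$-cuspidal blocks of $E_6(q)$ and $\tw2E_6(q)$ at $p=3$, and the $E_6$- resp.\ $\tw2E_6$-cuspidal blocks of $E_7(q)$ at $p=2$---\emph{no} unipotent character of height~$1$ exists at all, so both your primary strategy and your fallback fail there. (Your list of anticipated exceptions, involving $F_4[1]$ and blocks of $F_4$, does not match the actual problematic cases.) The paper resolves these four blocks by citing the proof of \cite[Prop.~8.4]{KM13}, where it is shown that a \emph{non-unipotent} character of height~$1$ exists in the block precisely when the defect group is non-abelian. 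Without recognising that one must leave the unipotent characters, your argument does not close.
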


\begin{proof}
Let $G$ be a quasi-simple exceptional group of Lie type. By
Proposition~\ref{prop:smallexc} we may assume that $G$ has Lie rank at least~4.
Moreover, we can assume that $p$ is not the defining characteristic of $G$,
by Theorem~\ref{thm:Liep}. In addition we may assume that $p$ divides the
order of the Weyl group of $G$, since otherwise the Sylow $p$-subgroups of $G$
are abelian (see \cite[Thm.~25.14]{MT}). The unipotent blocks of $G$ and their
defect groups are described in \cite{En00}. By Proposition~\ref{prop:B0exc}
we need not consider principal blocks. The only non-principal unipotent
$p$-blocks with non-abelian defect groups are then those listed in
Table~\ref{tab:unpblocks}, lying above a $d$-cuspidal character of a $d$-split
Levi subgroup $L$ as indicated in column~4 of the table.

\begin{table}[htbp]
\caption{Non-principal unipotent blocks}   \label{tab:unpblocks}
\[\begin{array}{|r|r|r|c|l|l|}
\hline
 G& p& d& L& \text{conditions}& \chi\\
\hline
     E_6(q)& 3& 1& \Ph1^2.D_4(q)& q\equiv1\pmod3& *\\
\hline
 \tw2E_6(q)& 3& 2& \Ph2^2.D_4(q)& q\equiv2\pmod3& *\\
\hline
     E_7(q)& 2& 1&     \Ph1.E_6(q)& q\equiv1\pmod4& *\\
           & 2& 2& \Ph2.\tw2E_6(q)& q\equiv3\pmod4& *\\
           & 3& 1&   \Ph1^3.D_4(q)& q\equiv1\pmod3& D_4,r\\
           & 3& 2&   \Ph2^3.D_4(q)& q\equiv2\pmod3& \phi_{280,8}\\
\hline
     E_8(q)& 2& 1&     \Ph1^2.E_6(q)& q\equiv1\pmod4& E_6[\theta],\phi_{2,1}\\
           & 2& 2& \Ph2^2.\tw2E_6(q)& q\equiv3\pmod4& E_8[-\theta]\\
           & 3& 1&     \Ph1^4.D_4(q)& q\equiv1\pmod3& D_4,\phi_{12,4}\\
           & 3& 2&     \Ph2^4.D_4(q)& q\equiv2\pmod3& \phi_{448,25}\\
\hline
\end{array}\]
\end{table}

In all cases it is immediate from the description of defect groups $D$ in
\cite{En00} that $\mh(D)=1$. Except for the cases marked with a '*' we have
printed in the last column of Table~\ref{tab:unpblocks} a unipotent character
(in the notation of \cite[\S13.8]{Ca})
in the relevant block of height~1. In the first four cases, no such unipotent
character exists. But it was argued in the proof of \cite[Prop.~8.4]{KM13}
that there exists a character of height~1 in these blocks if and only if the
defect groups are non-abelian. This achieves the proof.
\end{proof}

\subsection{Principal blocks}

We now consider principal blocks of arbitrary quasi-simple groups and prove
Theorem~\ref{thm:main}(1).

\begin{prop}   \label{prop:B0p=2}
 Let $G$ be a finite quasi-simple group. Then the principal 2-block of $G$
 satisfies the Eaton--Moret\'o conjecture.
\end{prop}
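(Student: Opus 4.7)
The plan is the following. If the Sylow $2$-subgroup $P$ of $G$ is abelian, then by \cite{KM13} we have $\mh(B_0)=\infty=\mh(P)$; so I would first reduce to the case that $P$ is non-abelian and then show $\mh(B_0)=\mh(P)=1$. By the classification of finite simple groups this splits into: (i)~covers of alternating groups, handled by Theorem~\ref{thm:alt}; (ii)~groups of Lie type in characteristic $2$, handled by Theorem~\ref{thm:Liep}; (iii)~sporadic groups and the Tits group $\tw2F_4(2)'$, to be verified directly from the known character tables; and (iv)~quasi-simple groups of Lie type in odd characteristic, which is the main case.

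For case~(iv), I would first establish $\mh(P)=1$ by exhibiting a section of $P$ isomorphic to $D_8$. The Sylow $2$-subgroups of classical groups in odd characteristic were described by Weir and Carter--Fong as a direct product of iterated wreath products of the form $C_{2^a}\wr C_2\wr\cdots\wr C_2$, together with a ``base'' coming from a low-rank subgroup such as $\SL_2$, $\SU_2$ or $\SU_3$. Non-abelianness of $P$ forces either a non-trivial wreath level or a non-abelian base factor; in either case $P$ admits a quotient isomorphic to $C_2\wr C_2\cong D_8$, which has an irreducible character of degree $2$, and inflation yields the desired character of $P$. For exceptional types, the Sylow $2$-subgroup sits inside the centralizer of a suitable involution whose components are of classical type, so the same reduction applies.

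To obtain $\mh(B_0)=1$, by Proposition~\ref{prop:degs} and Lusztig's Jordan decomposition the $2$-height of any $\chi\in\Irr(G)$ equals the $2$-height of a unipotent character of $C_{G^*}(s)^F$ for some semisimple $s\in G^*$, and by Brou\'e--Michel the characters of $B_0$ correspond to choosing $s$ a $2$-element; in particular $s=1$ gives unipotent characters. It therefore suffices to exhibit a unipotent character of $G$ of $2$-height $1$: in each type one can take the unipotent principal series character indexed by the reflection character of the Weyl group (or, in classical types, a unipotent character labelled by a symbol of small defect), whose degree polynomial has the form $q\cdot f(q)$ with $f$ prime to $2$, exactly as in the proof of Proposition~\ref{prop:Liep}. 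The key difference from the defining characteristic case is that $q$ is now odd, so the $2$-part of the height comes from the cyclotomic factors of $f(q)$ rather than from the power of $q$.

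The main obstacle, as in Theorem~\ref{thm:exc}, will be to verify uniformly that the chosen unipotent character indeed lies in the principal $2$-block rather than in an isolated (non-principal) unipotent block, and that no other irreducible character of $B_0$ has strictly smaller positive height. This requires combining the description of $2$-blocks of finite reductive groups (Fong--Srinivasan for classical groups, Cabanes--Enguehard more generally) with Lusztig's degree tables and a type-by-type check, extending the analysis of Propositions~\ref{prop:smallexc} and~\ref{prop:B0exc} from odd primes to $p=2$.
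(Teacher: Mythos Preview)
Your reduction to cases (i)--(iv) matches the paper's, and your handling of (i)--(iii) is the same. The divergence is in case~(iv), and there your plan for $\mh(B_0)=1$ has a real gap.

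You propose to exhibit a unipotent character of $2$-height~$1$ by taking the principal-series character indexed by the reflection representation, ``whose degree polynomial has the form $q\cdot f(q)$ with $f$ prime to $2$''. But as you immediately note, $q$ is odd here, so the $2$-height is governed by the cyclotomic factors of $f(q)$, not by the power of $q$. There is no reason for that specific character to have $2$-height exactly~$1$: already in type $A_{n-1}$ the degree is $q(q^{n-1}-1)/(q-1)$, whose $2$-part depends on $n$ and on $q\bmod 4$ in a non-uniform way. So the analogy with Proposition~\ref{prop:Liep} breaks down, and your fallback of a ``type-by-type check'' with degree tables would amount to a separate computation for each classical family and each congruence class of $q$, with no organising principle.

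The paper avoids this by a genuinely different mechanism that you are missing. Set $d=1$ or $2$ according to $q\equiv\pm1\pmod 4$, so that $4\mid\Phi_d(q)$. The principal $d$-Harish-Chandra series $\cE(G,(T,1))$ lies in $B_0$, and by \cite[Prop.~7.4]{MaH0} there is a bijection $\phi\mapsto\chi_\phi$ from $\Irr(W)$ (with $W=N_G(\bT)/T$ the relative Weyl group) satisfying $\chi_\phi(1)\equiv\pm\phi(1)\pmod{\Phi_d(q)}$. Since $4\mid\Phi_d(q)$, this congruence transfers $2$-height~$1$ from $W$ to $B_0$. Simultaneously, $N_G(T)$ contains a Sylow $2$-subgroup $P$, and $P/(P\cap T)$ is a Sylow $2$-subgroup $W_2$ of $W$; so a degree-$2$ character of $W_2$ inflates to $P$. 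Both questions are thereby reduced to the single statement that the Weyl group $W$ and its Sylow $2$-subgroup each have a character of $2$-height~$1$, which follows from the symmetric-group case in \cite[Thm.~4.3]{EM14} via the quotient $W\twoheadrightarrow\fS_m$. This unified reduction to Weyl groups is the key idea you are lacking; your Carter--Fong approach to $\mh(P)=1$ is plausible on its own, but without the Harish-Chandra congruence you do not have a working route to $\mh(B_0)=1$.
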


\begin{proof}
We go through the various possibilities for $G$ according to the
classification. For $G/Z(G)$ sporadic, the claim was shown in
\cite[Thm.~D]{EM14}. For alternating groups and their covering groups, see
Theorem~\ref{thm:alt}. If $G$ is of Lie type in characteristic~2, then
we showed the claim in Theorem~\ref{thm:Liep}. So now assume that $G=\bG^F$
is of Lie type in odd characteristic. The Ree groups ${}^2G_2(q^2)$ have
abelian Sylow 2-subgroups, so we may assume that $G$ is not very twisted.
If $G$ is of type $A_1$, so either $G=\SL_2(q)$ or $\PSL_2(q)$ with $q$ odd,
then the characters in the principal 2-block of $G$ are those lying in Lusztig
series indexed by
2-elements $s$ in the dual group $G^*=\PGL_2(q)$ or $\PSL_2(q)$, and they are
of height~1 if and only if the centralizer $C_{G^*}(s)$ has index congruent to
$2\pmod4$ in $G^*$. Now the Sylow 2-subgroups of $G^*$ are dihedral,
thus such elements $s$ exist if and only if these Sylow 2-subgroups are
non-abelian, which is the case if and only if the Sylow 2-subgroups of $G$
are non-abelian. These in turn are quaternion or dihedral, hence possess
characters of height~1 if and only if they are non-abelian. This deals with
the case of groups of type $A_1$ (the exceptional covering groups of
$\PSL_2(9)\cong\fA_6$ were considered in Theorem~\ref{thm:alt}).
\par
In the remaining cases, let $B_0$ denote the principal 2-block of $G$. Let $q$
be defined as above, and set $d=1$ if $q\equiv1\pmod4$ and $d=2$ if
$q\equiv3\pmod4$. Let $\bT$ be an $F$-stable maximal torus of $\bG$ containing
a Sylow $\Phi_d$-subgroup.
Then the pair $(\bT,1)$ is $d$-cuspidal, and by~\cite[Thm.~A]{En00} the
$d$-Harish-Chandra series $\cE(G,(T,1))$ above the trivial character of
$T=\bT^F$ lies in $B_0$.  Moreover, by~\cite[Prop.~7.4]{MaH0}, there is a
bijection $\Irr(W)\rightarrow\cE(G,(T,1))$, $\phi\mapsto\chi_\phi$, with the
property that $\chi_\phi(1)\equiv\pm\phi(1)\pmod{\Phi_d(q)}$, where
$W=N_G(\bT)/T$ is the relative Weyl group of $T$. Since
$\Phi_d(q)\equiv0\pmod4$, this shows that $\phi\in\Irr(W)$ has $2$-height~$1$
if and only if $\chi_\phi$ has.
\par
On the other hand by~\cite[4.10.2]{GLS}, $N_G(T)$ contains a Sylow $p$-subgroup
$P$ of $G$. Then $P_T=T\cap P$ is a normal subgroup of $P$ such that
$W_2=P/P_T$ is isomorphic to a Sylow $2$-subgroup of $W$. Thus, we are done
if we can show that both $W$ and $W_2$ possess characters of height~$1$ if
$P$ is non-abelian. However, it is shown in~\cite[4.10.2]{GLS} that whenever
$p=2$, the torus $\bT$ is of type $1$ or $w_0$, where $w_0$ is the longest
element of the Weyl group of $\bG$. In particular,~\cite[Prop.~25.3]{MT} gives
that $W$ is isomorphic to a Weyl group.

If $G$ is of exceptional type, the claim can now be checked from the known
character tables. Suppose that $G$ is of classical type. Then $W$ is a
classical Weyl group, and has a quotient $W'$ isomorphic to the symmetric
group $\fS_m$ with $m\ge n/2$, where $n$ is the rank of $G$. Denote by
$\pi:W\rightarrow W'$ the canonical projection, and write $W'_2=\pi(W_2)$.
Then $W'_2$ is a Sylow $2$-subgroup of $W'$, and by~\cite[Thm.~4.3]{EM14},
whenever $m>2$ then $W'$ and $W'_2$ both have a character of $2$-height~$1$,
and the result follows. This still remains true when $m=2$ and $W_2$ is the
dihedral group of order~$8$ or the symmetric group $\fS_3$. \par
Finally, if $m=1$ and hence $S=\PSL_3(q)$ with $d=2$ or $S=\PSU_3(q)$ with
$d=1$, then the Sylow 2-subgroups contain an abelian normal subgroup of
index~2 and hence possess characters of height~1. On the other hand, the
irreducible Deligne--Lusztig characters of $S$ of degree~$q^3-1$, respectively
$q^3+1$, lying in the Lusztig series of a regular element of order~8 in a
torus of order $q^2-1$ also have height~1 and are contained in the principal
block.
\end{proof}

The following result will be crucial for dealing with primes $p>2$:

\begin{lem}   \label{lem:ell-power}
 Let $p>2$ be a prime and $d$ an integer dividing $p-1$. Then for a primitive
 $d$th root of unity $\zeta$ we have
 $$\Phi_{dp^i}(\zeta)_p=p\quad\text{ for all $i\ge1$},$$
 and $\Phi_m(\zeta)$ is prime to $p$ for all $m\ne dp^i$.
\end{lem}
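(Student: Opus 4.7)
The plan is to work $p$-adically. Since $d\mid p-1$, Teichm\"uller lifting produces a primitive $d$-th root of unity $\zeta\in\ZZ_p^\times$, and from $p\equiv 1\pmod d$ we obtain the crucial identity $\zeta^p=\zeta$. Under this interpretation $\Phi_m(\zeta)\in\ZZ_p$ for every $m$, and the claim reduces to computing the $p$-adic valuation $v_p(\Phi_m(\zeta))$.

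First I would handle the case $p\nmid m$ (and $m\ne d$): modulo $p$ the polynomial $\Phi_m$ is separable with roots the primitive $m$-th roots of unity in $\overline{\FF}_p$, and since $\bar\zeta$ has order exactly $d$ in $\FF_p^\times$, one has $\Phi_m(\bar\zeta)\ne 0$, so $v_p(\Phi_m(\zeta))=0$. Next, for $p\mid m$ write $m=m'p^a$ with $\gcd(m',p)=1$, $a\ge1$. If $m'\ne d$, I would invoke the standard identity
$$\Phi_m(X)\cdot\Phi_{m'}(X^{p^{a-1}}) = \Phi_{m'}(X^{p^a}),$$
evaluate at $X=\zeta$, and use $\zeta^p=\zeta$ together with the previous step (which gives $\Phi_{m'}(\zeta)\in\ZZ_p^\times$) to cancel and conclude $\Phi_m(\zeta)=1$.

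The heart of the argument is the case $m=dp^a$ with $a\ge1$, where the recurrence above degenerates to $0/0$. Here I would factor directly using $\mu_{dp^a}=\mu_d\times\mu_{p^a}$: every primitive $dp^a$-th root of unity is uniquely of the form $\zeta'\eta$ with $\zeta'$ a primitive $d$-th root (lying in $\ZZ_p^\times$ by Teichm\"uller) and $\eta$ a primitive $p^a$-th root of unity. Grouping factors yields
$$\Phi_{dp^a}(\zeta) = \prod_{\zeta'}(\zeta')^{\phi(p^a)}\,\Phi_{p^a}(\zeta/\zeta').$$
For $\zeta'\ne\zeta$, the ratio $\zeta/\zeta'$ is a non-trivial $d$-th root of unity, so $\zeta/\zeta'-1$ is a unit; writing $\zeta/\zeta'-\eta=(\zeta/\zeta'-1)-(\eta-1)$ and using that $\eta-1$ has positive valuation, each such factor is a unit, hence $\Phi_{p^a}(\zeta/\zeta')$ is a unit. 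For $\zeta'=\zeta$ the corresponding factor is $\Phi_{p^a}(1)=p$. Multiplying, exactly one factor of $p$ survives, giving $v_p(\Phi_{dp^a}(\zeta))=1$, i.e.\ $\Phi_{dp^a}(\zeta)_p=p$. This last step, where the recurrence breaks and one must pass to the explicit factorization of $\mu_{dp^a}$ together with $\Phi_{p^a}(1)=p$ to isolate the unique factor of $p$, is the only real obstacle.
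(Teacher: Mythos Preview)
Your argument is correct. The paper proceeds differently and much more tersely: it cites \cite[Lemma~5.2]{MaH0} outright for the assertion that $\Phi_m(\zeta)$ is prime to $p$ when $m\ne dp^i$, and for the case $m=dp^i$ simply observes that $\Phi_{dp^i}(X)$ divides $\Phi_{p^i}(X^d)=\Phi_p(X^{dp^{i-1}})$, whence $\Phi_{dp^i}(\zeta)$ divides $\Phi_{p^i}(\zeta^d)=\Phi_{p^i}(1)=\Phi_p(1)=p$. (Equality of $p$-parts is then implicit from combining this upper bound with the cited result, since $\prod_{e\mid d}\Phi_{ep^i}(\zeta)=\Phi_{p^i}(1)=p$ and every factor with $e\ne d$ is a $p$-unit.) Your route is longer but entirely self-contained: you handle the ``prime to $p$'' part directly via separability of $\Phi_m$ mod~$p$ together with the recurrence $\Phi_{m'p^a}(X)\,\Phi_{m'}(X^{p^{a-1}})=\Phi_{m'}(X^{p^a})$ and the Teichm\"uller identity $\zeta^p=\zeta$; and for $m=dp^a$ you bypass polynomial divisibility altogether, factoring $\Phi_{dp^a}(\zeta)$ over the roots $\zeta'\eta$ so as to isolate the single non-unit contribution $\Phi_{p^a}(1)=p$. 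Both arguments ultimately rest on $\Phi_{p^a}(1)=p$; yours computes the valuation exactly in one stroke, while the paper's obtains an upper bound and leans on the other half of the lemma for the lower bound.
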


\begin{proof}
The fact that $\Phi_m(\zeta)$ is prime to $p$ for $m\ne dp^i$ is shown
for example in \cite[Lemma~5.2]{MaH0}. For the other assertion note that
$\Phi_{dp^i}(X)$ divides $\Phi_{p^i}(X^d)$ and that
$\Phi_{p^i}(X)=\Phi_p(X^{p^{i-1}})$ for $i\ge1$. Thus
$\Phi_{dp^i}(\zeta)$ divides $\Phi_{p^i}(1)=\Phi_p(1)=p$.
\end{proof}

\begin{thm}   \label{thm:B0}
 The Eaton--Moret\'o conjecture holds for the principal blocks of all
 quasi-simple groups.
\end{thm}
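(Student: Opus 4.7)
The plan is to combine the earlier reductions to land on the case $G$ quasi-simple of classical Lie type in non-defining odd characteristic with non-abelian Sylow $p$-subgroups, and then to extend the $d$-Harish-Chandra argument of Proposition~\ref{prop:B0p=2} to odd primes via Lemma~\ref{lem:ell-power}. Concretely, Proposition~\ref{prop:B0p=2} settles $p=2$; the classification together with \cite[Thm.~D]{EM14} for sporadic groups and Theorem~\ref{thm:alt} for covers of alternating groups reduces to $G$ quasi-simple of Lie type; Theorem~\ref{thm:Liep} disposes of the defining characteristic, \cite{KM13} of abelian Sylow $p$-subgroups, and Propositions~\ref{prop:smallexc},~\ref{prop:B0exc} of exceptional type. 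Hence I may assume that $G$ is quasi-simple of classical type, $p$ is odd and not the defining characteristic, and that a Sylow $p$-subgroup $P$ of $G$ is non-abelian.

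Next let $d$ be the multiplicative order of $q$ modulo $p$, so $d\mid p-1$. Take $\bT$ a Sylow $\Ph d$-torus of $(\bG,F)$ and set $T=\bT^F$, $W=N_G(\bT)/T$. Then $(\bT,1)$ is $d$-cuspidal and the $d$-Harish-Chandra series $\cE(G,(\bT,1))$ lies in $B_0$, and by \cite[Prop.~7.4]{MaH0} there is a bijection $\phi\mapsto\chi_\phi$ from $\Irr(W)$ onto $\cE(G,(\bT,1))$ with $\chi_\phi(1)\equiv\pm\phi(1)\pmod{\Ph d(q)}$. The generic degree polynomial of $\chi_\phi$ is a product of cyclotomic factors times a power of $q$, and by Lemma~\ref{lem:ell-power} its $p$-adic valuation at $X=q$ is controlled by the multiplicities with which $\Ph d,\Ph{dp},\Ph{dp^2},\ldots$ occur. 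For $G$ of classical type $W$ is a complex reflection group of the form $G(d,1,m)$ or $G(d,2,m)$ having a quotient isomorphic to the symmetric group $\fS_m$ which carries a Sylow $p$-subgroup of $W$.

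Proposition~\ref{prop:principalblock} already provides an irreducible character of $P$ of degree $p$, giving $\mh(P)=1$. For the block side, suppose first that the Sylow $p$-subgroup of $W$ is itself non-abelian, which happens as soon as $m\ge p^2$. Then standard hook-length combinatorics on $\fS_m$ produces $\phi\in\Irr(W)$ with $\phi(1)_p=p$, and combined with the above bijection this should yield $\chi_\phi\in B_0$ of height~$1$, proving $\mh(B_0)=1=\mh(P)$ in that case.

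The main obstacle I expect is twofold. First, the congruence $\chi_\phi(1)\equiv\pm\phi(1)\pmod{\Ph d(q)}$ used in the $p=2$ proof is strong enough to transfer the height only when $v_p(\Ph d(q))\ge 2$, whereas the generic situation for odd $p$ is $v_p(\Ph d(q))=1$; the intended remedy is to read the $p$-height of $\chi_\phi$ off the generic degree polynomial of $\phi$ directly, converting the combinatorial content of its $\Ph{dp^i}$-factors ($i\ge 0$) into $p$-adic valuations via Lemma~\ref{lem:ell-power}. Secondly, when $m<p^2$ the Sylow $p$-subgroup of $W$ is abelian, so the non-abelianness of $P$ comes entirely from a non-trivial action of $P/T_p$ on the abelian normal subgroup $T_p=P\cap T$ and no hook-length character $\phi$ is available. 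These residual small-rank families of classical groups should be treated directly by exhibiting Deligne--Lusztig characters attached to suitable $p$-regular semisimple elements of $G^*$, in the spirit of the $\PSL_3(q),\PSU_3(q)$ endgame at the end of the proof of Proposition~\ref{prop:B0p=2}.
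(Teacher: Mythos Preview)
Your overall strategy matches the paper's: the reductions are identical, and the key idea---transferring $p$-heights from $\Irr(W)$ to the principal $\Phi_d$-series via a bijection, with Lemma~\ref{lem:ell-power} controlling the $p$-adic valuations---is exactly right. Your first ``obstacle'' and its remedy are also on target: the paper bypasses the congruence from \cite[Prop.~7.4]{MaH0} by invoking \cite[Thm.~3.2]{BrMaMi} instead, which gives the stronger relation $\phi(1)=\pm\chi_\phi(\zeta_d)$ (specialisation of the generic degree at a primitive $d$th root of unity), so that Lemma~\ref{lem:ell-power} applies directly to show that $\chi_\phi(1)$ and $\phi(1)$ have the same $p$-part. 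One minor correction: the relative Weyl group is $G(d',1,m)$ or $G(d',2,m)$ for some $d'$ prime to $p$ (not necessarily $d$ itself), and it is this coprimality that forces $p\mid m!$.

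The substantive gap is in your second obstacle. You assume that once the Sylow $p$-subgroup of $\fS_m$ is abelian (i.e.\ $p\le m<p^2$) there is no $\phi\in\Irr(\fS_m)$ with $\phi(1)_p=p$, and you therefore propose to treat all such $m$ by ad hoc Deligne--Lusztig constructions in $G$. This is not so: $\fS_m$ can certainly have irreducible characters of degree divisible exactly once by $p$ even when its Sylow $p$-subgroup is abelian---these are simply characters of less-than-maximal $p$-defect in $\fS_m$, not height-$1$ characters of its principal block, but that is irrelevant since all you need is $\phi(1)_p=p$. The paper handles this uniformly via Olsson's combinatorics of $p$-extensions and deviations \cite[Prop.~6.6]{Ol93}, exhibiting such a $\phi$ whenever $p\ge5$, and also for $p=3$ provided $a_0\ne0$ in the $p$-adic expansion $m=a_0+a_1p$. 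Only the genuinely exceptional residue $p=3$, $m\in\{3,6\}$, $d'=1$---that is, $G\in\{\SL_3(q),\SL_6(q),\SU_3(q),\SU_6(q)\}$---requires the Deligne--Lusztig argument you sketch, and there a single regular semisimple $3$-element $s\in\bT^{*F}$ with $C_{\bG^*}(s)=\bT^*$ suffices. Your plan would work in principle, but it leaves a much larger family of cases to be checked by hand than is actually necessary.
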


\begin{proof}
We go through the various possibilities for $G$ according to the
classification. For $G/Z(G)$ sporadic, the claim was shown in
\cite[Thm.~D]{EM14}. For alternating groups and their covering groups, see
Theorem~\ref{thm:alt} above. If $G$ is of Lie type in characteristic~$p$, then
we showed the claim in Theorem~\ref{thm:Liep}. If $G$ is of Lie type in odd
characteristic and $p=2$, the assertion is in Proposition~\ref{prop:B0p=2}.
Finally, if $G$ is of exceptional type, we proved this in
Proposition~\ref{prop:B0exc}.
\par
Thus we may assume that $G$ is of classical type, that $p\ne2$ is not the
defining characteristic of $G$ and that the Sylow $p$-subgroups of $G$ are
non-abelian. Let $G=\bG^F$ where $\bG$ is simple of simply
connected classical type. Let $d$ denote the order of $q$ modulo~$p$.

As we saw in Proposition~\ref{prop:principalblock}, any non-abelian Sylow
$p$-subgroup of $G$ has a character of $p$-defect $1$. We now will show
that the same property holds for the principal $p$-block of $G$. We consider
two cases.

Assume first that $p\geq 5$.
Then the principal $p$-block contains the principal $\Phi_d$-series $\cE(G,d)$
of unipotent characters of $G$ (see \cite[Rem.~6.12]{KM13} for references).
These are distinguished by the property that their generic degree polynomial
is not divisible by $\Phi_d$. Let $W$ denote the relative Weyl group of the
principal $\Phi_d$-series, that is, $W=N_G(\bL)/L$, where $\bL$ is the
centralizer of a Sylow $\Phi_d$-torus of $\bG$. Then by \cite[Thm.~3.2]{BrMaMi},
there is a bijection $\Irr(W)\rightarrow\cE(G,d)$, $\phi\mapsto\chi_\phi$, with
the property that $\phi(1)=\pm\chi_\phi(\zeta_d)$,
where $\chi_\phi(X)$ is the degree polynomial of the unipotent character
$\chi_\phi$. Now none of these degree polynomials is divisible by $\Phi_d$,
and then Lemma~\ref{lem:ell-power} in conjunction with \cite[Lemma~5.2]{MaH0}
shows that both $\chi_\phi(1)$ and $\phi(1)$ are divisible by the same power
of $p$.   \par
Furthermore, by \cite[\S3]{BrMaMi}, $W$ is either isomorphic to $G(d',1,m)$
for some positive integers $m$ and $d'$ (with $d'$ prime to $p$), or
to its subgroup $G(d',2,m)$ of index $2$. In all cases, $W$ has a quotient
isomorphic to $\fS_m$. On the other hand,
by~\cite[Thm.~25.14]{MT} we have that $p$ divides $|W|$ and since $d'$ is
prime to $p$, it follows that $p$ divides $m!$. Let $P$ be a Sylow
$p$-subgroup of $\fS_m$. If $P$ is non-abelian, then by~\cite[Note
p.74]{Ol93}, the principal $p$-block of $\fS_m$ contains an irreducible
character $\phi$ of height $1$, so $\phi$ has $p$-defect $1$.  Suppose
now that $P$ is abelian, and write $m=a_0+a_1p$ for the $p$-adic
decomposition of $m$ (where $0\leq a_0,\,a_1<p$, and $a_1\neq 0$ because
$p\leq m$). Set $\alpha_0=a_0+p$ and $\alpha_1=a_1-1\geq 0$. Then we have
$\alpha_0+\alpha_1p=m$ and $\alpha_0+\alpha_1-a_0-a_1=p-1$. Hence, following
\cite[p.\,43]{Ol93}, the $p$-extension $(\alpha_0,\alpha_1)$ of $m$ has
deviation~$1$. With the notation (5) of \cite[p.~43]{Ol93}, we have
$c_{p}(p,\alpha_1)\neq 0$ because $p>\alpha_1$. Furthermore, one has
$c_{p}(1,\alpha_0)=c(p,\alpha_0)$, where $c(p,\alpha_0)$ denotes the number
of $p$-cores of $\alpha_0$.
Assume $a_0\neq 0$. Then the partition $(a_0+1,1^{p -1})$ has no $p$-hooks.
It follows that $c(p,\alpha_0)\neq 0$. (Note that until now, we did not
need the assumption $p\neq 3$). Assume $a_0=0$. Then any non-hook partition
is a $p$-core (because $p\geq 5$), and we also have $c(p,\alpha_0)\neq 0$.
In these cases, \cite[Prop.~6.6]{Ol93} gives that $\fS_m$ has an irreducible
character of $p$-defect $1$, and so, the principal $p$-block of~$G$.
\par
Suppose now that $p=3$ and $a_0=0$, i.e., $m\in\{3,6\}$ and $d\in\{1,2\}$.
If $d'=2$ then $W\cong G(2,1,m)$ or $G(2,2,m)$ has a character of
$3$-height~$1$, and we conclude by the same argument as above. Assume $d'=1$.
Thus, $W\cong \fS_m$ with $m\in\{3,6\}$. On the other hand, \cite[\S 3]{BrMaMi}
gives that $G$ is of type $A$, isomorphic to $\SL_3(q)$ or $\SL_6(q)$ (resp.
$\SU_3(q)$ or $\SU_6(q)$) whenever $d=1$ (resp. $d=2$). Suppose that $G$
is untwisted, that is, $3$ divides $q-1$ and $G=\SL_3(q)$ or $G=\SL_6(q)$
(according to $m=3$ or $m=6$).  Let $\bT$ be an $F$-stable maximally split
torus of $\bG$, and $s$ be a semisimple element of order $3$ of $\bT^{*F}$
such that $C_{\bG^*}(s)=\bT^*$. Then the unique irreducible character of
$\cE(G,s)$ has $3$-defect~$1$ and lies in the principal $3$-block of $G$
by~\cite[Main Thm.]{CaEn}. When $G$ is twisted, that is $G\cong\SU_3(q)$ or
$\SU_6(q)$ with $q+1$ divisible by $3$, we conclude by a similar argument.
\par
We need not consider exceptional covering groups of $G/Z(G)$, since the
order of their centers differ from that of some non-exceptional covering group
of $G$ by a power of the defining characteristic.
\end{proof}


\end{document}